\newcommand{\K}{\mathbb{K}}
\newcommand{\C}{\mathbb{C}}
\newcommand{\Q}{\mathbb{Q}}
\newcommand{\Z}{\mathbb{Z}}
\newcommand{\N}{\mathbb{N}}
\newcommand{\HP}{H \! P}
\newtheoremstyle{theorem}
{12pt} 
{12pt} 
{\slshape 

} 
{} 
{\bfseries} 
{} 
{ } 
{} 
\newtheoremstyle{definition}
{12pt} 
{12pt} 
{\slshape 

} 
{} 
{\bfseries} 
{} 
{ } 
{} 
\newtheoremstyle{algorithm}
{10pt} 
{10pt} 
{\slshape 

} 
{} 
{\bfseries} 
{} 
{ } 
{} 
\theoremstyle{theorem}
\newtheorem{theorem}{Theorem}
\newtheorem{proposition}[theorem]{Proposition}
\newtheorem{lemma}[theorem]{Lemma}
\theoremstyle{definition}
\newtheorem{definition}[theorem]{Definition}
\theoremstyle{remark}
\newtheorem{remark}[theorem]{Remark}
\newtheorem{example}[theorem]{Example}
\theoremstyle{algorithm}
\definecolor{BlueCarisma}{HTML}{004b84}
\definecolor{RosinoCarisma}{HTML}{dadada}
\definecolor{grad0}{rgb}{0 0.50002 1 }
\definecolor{grad1}{rgb}{0.00002 0 1}
\definecolor{grad2}{rgb}{0.24998 0 1}
\definecolor{grad3}{rgb}{0.5 0 1}
\definecolor{grad4}{rgb}{0.75002 0 1}
\definecolor{grad5}{rgb}{0.99998 0 1}
\definecolor{grad6}{rgb}{1 0 0.75}
\definecolor{grad7}{rgb}{1 0 0.49998}
\definecolor{grad8}{rgb}{1 0 0.25002}
\definecolor{grad9}{rgb}{1 0 0 }
\definecolor{grad10}{rgb}{1 0.1002 0}
\definecolor{grad11}{rgb}{1 0.26998 0}
\definecolor{grad12}{rgb}{1 0.4 0}
\definecolor{grad13}{rgb}{1 0.6 0}
\definecolor{grad14}{rgb}{1 0.8 0}
\definecolor{grad15}{rgb}{1 0.9 0}
\definecolor{grad_0}{cmyk}{0.7998 1 0 0.4}
\definecolor{grad_1}{cmyk}{0.7 0.6 0 0}
\definecolor{grad_2}{cmyk}{0.75002 0 1 0.1}
\definecolor{grad_3}{cmyk}{0.34998 0 1 0.1}
\title{A partial characterization of Hilbert quasi-polynomials in the non-standard case}
\date{\today}
\author{Massimo Caboara$^{*1}$, Carla Mascia$^{*2}$ \\ \small{$*1$ Department of Mathematics, University of Pisa, Italy} \\ \small{$*2$ Department of Mathematics, University of Trento, Italy} \\ \small{Email addresses: massimo.caboara@unipi.it, carla.mascia@unitn.it}}
\begin{document}

\maketitle

\begin{center}
\textbf{Abstract}
\end{center}

The Hilbert function, its generating function and the Hilbert polynomial of a
graded ring $\K[x_1,\ldots,x_k]$ have been extensively studied since the famous
paper of Hilbert: \emph{Ueber die Theorie der algebraischen Formen}
\cite{hilbert1890theorie}. In particular, the coefficients and the degree of the
Hilbert polynomial play an important role in Algebraic Geometry. 
If the ring grading is non-standard, then its Hilbert function 
is not eventually equal to a polynomial but to a quasi-polynomial.
It turns out that a Hilbert quasi-polynomial $P$ of degree $n$  splits into a
polynomial $S$ of degree $n$ and a lower degree quasi-polynomial $T$. 
We have completely determined the degree of $T$ and the first few
coefficients of $P$. Moreover, the quasi-polynomial $T$ has a periodic
structure that we have described.
We have also implemented a software to compute effectively the Hilbert
quasi-polynomial for any ring $\K[x_1,\ldots,x_k]/I$.

\medskip

{\bf Keywords}: Non-standard gradings, Hilbert quasi-polynomial.


\section{Introduction}
In this section, we provide a very short overview of Hilbert function, Hilbert-Poincar\'e series and Hilbert polynomial of graded rings, omitting details and proofs, which can be found in any introductory commutative algebra book (i.e. \cite{kreuzer2005computational2}).
From now on, $\K$ will be a field and $(R/I,W)$ stands for the polynomial ring $R/I$, where $R:=\K[x_1,\dots, x_k]$  is a quotient ring graded by $W := [d_1, \dots, d_k] \in \N^k_+$ and $I$ is a $W$-homogeneous ideal of $R$, that is 
\[
\deg(x_1^{\alpha_1}\cdots x_k^{\alpha_k}) := d_1 \alpha_1 + \cdots + d_k \alpha_k
\]
with the usual definition for the $n$-graded component of $R/I$.
Due to Macaulay's lemma, we can suppose that $I$ is a monomial ideal.
Recall that if $W =[1, \dots, 1]$ the grading is called \textbf{standard}. 

\

The \textbf{Hilbert function} $H_{R/I}^W : \N \rightarrow \N$ of $(R/I,W)$ is defined by 
\[
H_{R/I}^W (n) := \dim_{\K}((R/I)_n)
\]
and the Hilbert-Poincar\'e series of $(R/I,W)$ is given by
\[
\HP_{R/I}^W (t) := \sum_{n \in \N} H_{R/I}^W(n) t^n \in \N \llbracket t \rrbracket
\]

When the grading given by $W$ is clear from the contest, we denote respectively the Hilbert function and the Hilbert-Poincar\'e series of $(R/I,W)$ by $H_{R/I}$ and $\HP_{R/I}$.
The following two well-known results characterize the Hilbert-Poincar\'e series and the Hilbert function, but the latter holds only for standard grading.

\begin{theorem}[Hilbert-Serre] 

The Hilbert-Poincar\'e series of $(R/I,W)$ is a rational function, that is for suitable  $h(t) \in \Z[t]$ we have that
\[
\HP_{R/I}(t) = \frac{h(t)}{\prod_{i=1}^k( 1-t^{ d_i})}\in\Z\llbracket t \rrbracket
\]
\end{theorem}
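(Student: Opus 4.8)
The plan is to argue by induction on the number of variables $k$, using the short exact sequence induced by multiplication by the last variable. Since by Macaulay's lemma we may take $I$ to be a monomial ideal, $M := R/I$ is a finitely generated $W$-graded $R$-module; in fact I would prove the rationality statement for an arbitrary finitely generated $W$-graded $R$-module $M$, since this is the generality the induction needs (a shift of the grading appears naturally below, which cannot be absorbed if we restrict to cyclic modules of the form $R/I$).

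For the base case $k = 0$ we have $R = \K$, and any finitely generated graded module is a finite-dimensional graded $\K$-vector space, so its Hilbert-Poincar\'e series is a polynomial in $t$, i.e. of the required shape with empty denominator. For the inductive step, I regard multiplication by $x_k$ as a degree-zero map $M(-d_k) \xrightarrow{\,\cdot x_k\,} M$, where $M(-d_k)$ is $M$ with its grading shifted so that $M(-d_k)_n = M_{n-d_k}$. This fits into the four-term exact sequence of graded modules
\[
0 \longrightarrow K \longrightarrow M(-d_k) \xrightarrow{\,\cdot x_k\,} M \longrightarrow C \longrightarrow 0,
\]
where $K$ is the shifted annihilator of $x_k$ in $M$ and $C := M / x_k M$. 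The key observation is that $x_k$ annihilates both $K$ and $C$, so each is a finitely generated graded module over $R/(x_k) \cong \K[x_1, \dots, x_{k-1}]$ with grading $[d_1, \dots, d_{k-1}]$, to which the inductive hypothesis applies.

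Because the alternating sum of dimensions along an exact sequence of graded modules vanishes in each degree, and since $\HP_{M(-d_k)}(t) = t^{d_k}\HP_M(t)$, the sequence yields
\[
\HP_K(t) - t^{d_k}\HP_M(t) + \HP_M(t) - \HP_C(t) = 0,
\]
and hence
\[
(1 - t^{d_k})\,\HP_M(t) = \HP_C(t) - \HP_K(t).
\]
By induction $\HP_C$ and $\HP_K$ are rational with denominator $\prod_{i=1}^{k-1}(1-t^{d_i})$ and numerator in $\Z[t]$; dividing the difference by $1-t^{d_k}$ puts $\HP_M$ into the asserted form with denominator $\prod_{i=1}^{k}(1-t^{d_i})$, and specializing $M = R/I$ gives the stated theorem.

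I expect the only delicate points to be the degree bookkeeping for the shift $M(-d_k)$ and the verification that $K$ and $C$ really are modules over the polynomial ring in $k-1$ variables, since this is exactly what licenses the descent to fewer variables and hence the induction. Finite generation of $K$ and $C$ — needed in order to invoke the inductive hypothesis — is guaranteed by the Noetherianity of $R$.
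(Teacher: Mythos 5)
Your proof is correct: it is the standard d\'evissage argument, proving the rationality statement for all finitely generated $\N$-graded modules by induction on $k$ via the four-term exact sequence coming from multiplication by $x_k$, and the degree bookkeeping ($\HP_{M(-d_k)}(t)=t^{d_k}\HP_M(t)$) and the descent of $K$ and $C$ to $\K[x_1,\dots,x_{k-1}]$ are handled properly. The paper itself offers no proof to compare against --- it quotes Hilbert--Serre as a classical result and defers to textbook references --- so there is nothing to reconcile; the only cosmetic remark is that your appeal to Macaulay's lemma is unnecessary, since $R/I$ is a finitely generated graded module for any $W$-homogeneous ideal $I$, monomial or not.
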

\noindent
The polynomial $h(t)$ which appears in the denominator of $\HP_{R/I}$
is called \emph{h-vector} and we denote it by $< I >$.

\begin{definition}
Let $R$ be a polynomial standard graded ring and $I$ a homogeneous ideal of $R$. 
Then there exists a polynomial $\ P_{R/I}(x) \in \Q[x]$ such that 
\[
H_{R/I}(n) = P_{R/I}(n)\ \forall\ n \gg 0
\]
This polynomial is called \textbf{Hilbert polynomial of $R/I$}.
\end{definition}

\section{Hilbert quasi-polynomials} 
 
As we will see, in the case of non-standard grading, the Hilbert function of $(R/I,W)$ is definitely equal to a quasi-polynomial $P_{R/I}^W$ instead of a polynomial. The main aim of this section is to investigate the structure of $P_{R/I}^W$, such as its degree, its leading coefficient and proprieties of its coefficients.

\subsection{Existence of the Hilbert quasi-polynomials}

We recall that a function $f \colon \mathbb{N} \to \mathbb{N}$ is a (rational) \emph{quasi-polynomial of period s} if there exists a set of $s$ polynomials $\{p_0, \dots, p_{s-1}\}$ in $ \Q[x]$ such that we have $f(n) = p_i(n)$ when $n \equiv i \bmod s$. Let $d:= lcm(d_1,\dots, d_k)$. We are going to show that the Hilbert function of $(R/I, W)$ is definitely equal to a quasi-polynomial of period $d$. 

\begin{proposition} \label{def hilbert quasi-poly}
Let $(R/I,W)$ be as above. There exists a unique quasi-polynomial $P_{R/I}^W:=\{P_0, \dots, P_{d-1}\}$ of period $d$ such that $H_{R/I}(n) = P_{R/I}^W(n)$ for all $n\gg 0$, that is  
$$
H_{R/I}(n) = P_i(n) \qquad \forall i \equiv n\mod d  \quad \text{and} \quad \forall n\gg 0
$$
$P_{R/I}^W$ is called  the \textbf{Hilbert quasi-polynomial associated to (R/I,W)}.
\end{proposition}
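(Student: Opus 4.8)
The plan is to leverage the Hilbert-Serre theorem, which tells us that the Hilbert-Poincaré series is a rational function with denominator $\prod_{i=1}^k(1-t^{d_i})$. The key observation is that $1-t^{d_i}$ divides $1-t^d$ for every $i$, since $d_i \mid d$ by definition of $d = \operatorname{lcm}(d_1,\dots,d_k)$. First I would multiply numerator and denominator by suitable factors to rewrite the series in the form
\[
\HP_{R/I}(t) = \frac{g(t)}{(1-t^d)^k}
\]
for some $g(t) \in \Z[t]$. This puts the denominator into a shape whose power-series expansion has a transparent periodic structure.

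Next I would perform a partial fraction decomposition of this rational function over $\C$. The poles all lie at the $d$-th roots of unity, each with multiplicity at most $k$. Writing the proper part of the decomposition as a sum of terms of the form $c/(1-\zeta^{-1} t)^{j}$, where $\zeta$ ranges over $d$-th roots of unity and $1 \le j \le k$, I would expand each such term as a power series. The coefficient of $t^n$ in $1/(1-\zeta^{-1}t)^{j}$ is $\binom{n+j-1}{j-1}\zeta^{-n}$, which is a polynomial in $n$ of degree $j-1$ multiplied by $\zeta^{-n}$. Since $\zeta^{-n}$ depends only on $n \bmod d$, summing these contributions shows that, for $n$ larger than the degree of the polynomial part of the decomposition, $H_{R/I}(n)$ agrees with a function that is polynomial on each residue class modulo $d$. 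Collecting the coefficient expressions for each fixed residue $i$ yields the polynomials $P_0,\dots,P_{d-1}$, establishing existence of a quasi-polynomial of period $d$ matching $H_{R/I}(n)$ for all $n \gg 0$.

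For uniqueness, I would argue that if two quasi-polynomials of period $d$, say $\{P_i\}$ and $\{Q_i\}$, both agree with $H_{R/I}(n)$ for all large $n$, then for each residue $i$ the polynomials $P_i$ and $Q_i$ agree on infinitely many integers (all large $n \equiv i \bmod d$). Two polynomials agreeing at infinitely many points must be identical, so $P_i = Q_i$ for every $i$, giving uniqueness.

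The main obstacle I anticipate is bookkeeping in the partial fraction step: one must carefully track that each pole contributes a term whose coefficient sequence is $\zeta^{-n}$ times a genuine polynomial in $n$, and then verify that summing over all $d$-th roots of unity produces real (indeed rational) values organized by residue class. Care is also needed to pin down precisely how large $n$ must be so that the non-proper polynomial part of the rational-function expansion no longer interferes; this is where the phrase ``for all $n \gg 0$'' is justified rather than ``for all $n$''. The root-of-unity cancellation that guarantees rationality of each $P_i$ is the conceptual heart of the argument, even though it is ultimately a routine consequence of the conjugation symmetry of the partial fraction coefficients.
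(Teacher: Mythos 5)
Your proposal is correct and follows essentially the same route as the paper: partial fractions with poles at the $d$-th roots of unity, binomial-series expansion of each term, and the observation that $\zeta^{n}$ depends only on $n \bmod d$ (the paper factors the denominator as $\prod_j(1-\zeta^j t)^{\alpha_j}$ rather than clearing to $(1-t^d)^k$, but this is cosmetic). Your explicit uniqueness argument via polynomials agreeing at infinitely many points is a small bonus, since the paper asserts uniqueness but leaves that step implicit.
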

\begin{proof}
From the Hilbert-Serre theorem, we know that the Hilbert-Poincar\'e series of $R/I$ can be written as a rational function 
\[
\HP_{R/I}(t) = \frac{h(t)}{\prod_{i=1}^{k} (1-t^{d_i})} \in \Z[|t|]
\]
Let $\zeta$ be a primitive dth root of unity, so we can write
\begin{equation*}
\prod_{i=1}^k (1-t^{d_i}) = \prod_{j=0}^{d-1} (1- \zeta^j t)^{\alpha_j}, \quad \text{for some $\alpha_j \in \N$ such that} \; \sum_{j=0}^{d-1}  \alpha_j = \sum_{i=1}^k d_i 
\end{equation*}
By using this relation and partial fractions, we have
\begin{equation}\label{eq HP_R/I}
\HP_{R/I}(t) = \frac{h(t)}{\prod_{j=0}^{d-1} (1- \zeta^j t)^{\alpha_j}} = \sum_{j=0}^{d-1} \frac{Q_j(t)}{(1- \zeta^j t)^{\alpha_j}}
\end{equation}
where $Q_j(t) \in \C[t]$ is a polynomial of degree $n_j$ for all $j=0, \dots, d-1$, namely $Q_j(t) := \sum_{h=0}^{n_j} a_{jh} t^h$.

\medskip

We are going to consider individually any addend in (\ref{eq HP_R/I}), so 
\begin{align*}
\frac{Q_j(t)}{(1- \zeta^j t)^{\alpha_j}} &= \left( \sum_{h=0}^{n_j} a_{jh} t^h \right) \cdot \frac{1}{(1- \zeta^j t)^{\alpha_j}} \\
&= \left( \sum_{h=0}^{n_j} a_{jh} t^h \right) \cdot \left( \sum_{n \geq 0} \binom{ n + \alpha_j -1}{n} \zeta ^{jn} t^n \right) \\
&= \sum_{n\geq 0} \left[ \sum_{h=0}^{n_j} a_{jh}\binom{n+ \alpha_j -h-1}{n-h} \zeta^{j(n-h)} \right] t^n = \sum_{n\geq 0} b_{jn} t^n
\end{align*}
where 
\[b_{jn} := \sum_{h=0}^{n_j} a_{jh}\binom{n+ \alpha_j -h-1}{n-h} \zeta^{j(n-h)} \\
\]
We can rewrite the Hilbert-Poincar\'e series in the following way:
\begin{align*}
\HP_{R/I}(t) = \sum_{j=0}^{d-1} \frac{Q_j(t)}{(1- \zeta^j t)^{\alpha_j}} = \sum_{j=0}^{d-1} \sum_{n \geq 0} b_{jn} t^n = \sum_{n \geq 0} \left( \sum_{j=0}^{d-1} b_{jn} \right) t^n
\end{align*}
and, therefore, for all $n \geq \underset {j}{\max} \{n_j\}$, we have
\begin{align*}
H_{R/I} (n) = \sum_{j=0}^{d-1}  b_{jn}= \sum_{j=0}^{d-1} \sum_{h=0}^{n_j} a_{jh}\binom{n+ \alpha_j -h-1}{n-h} \zeta^{j(n-h)} = \sum_{j=0}^{d-1} S_j (n) \zeta^{jn}
\end{align*}
where 
\[
S_j (n) := \sum_{h=0}^{n_j} a_{jh}\binom{n+ \alpha_j -h-1}{n-h} \zeta^{-jh}\\
\]
So, given $n \geq \underset {j}{\max} \{n_j\}$, let $i$  be an integer such that $0 \leq i \leq d-1$ and $i \equiv n \bmod d$, the Hilbert function evaluated at $n$ is equal to the evaluation at $n$ of the polynomial 
\begin{align*}
P_i(x) := S_0(x) + S_1(x)\zeta^i + \dots + S_{d-1}(x) \zeta^{(d-1)i}
\end{align*}
\end{proof}

By the proof, we find out that $H_{R/I}(n) = P_{R/I}^W(n)$ for all $n \geq \underset {j}{\max} \{n_j\}$, where $n_j$ are the degrees of the polynomials $Q_j(t)$ which appear in Equation ($\ref{eq HP_R/I}$) of the Hilbert-Poincar\'e series.

\begin{remark}
Proposition \ref{def hilbert quasi-poly} asserts that the Hilbert quasi-polynomial consists of $d$ polynomials, but it doesn't assure that they are all distinct. Actually, it can happen that some of them are equal to each other, a trivial example is given  for $gcd(d_1, \dots, d_k) \neq 1$, as we will see.
Remark that the proof of proposition yields a way to compute the Hilbert quasi-polynomial of $(R/I,W)$, when the Hilbert-Poincar\'e series is known.
\end{remark}


%
%

\begin{remark}
All the polynomials of the Hilbert quasi-polynomial $P_{R/I}^W$ have rational coefficients. In fact, we recall that if a polynomial $P(x) \in \C[x]$ of degree $n$ is such that $P(x_i) \in \Z$ for some $x_0, \dots, x_n \in \Z$, then $P(x) \in \Q[x]$. By definition, $P_i(n) \in \N$ for all sufficiently large $n \in \N$ such that $i \equiv n \mod d$.
\end{remark}

Due to the following result, we can restrict our study to the case $(R, W)$ where $W$ is such that $d=1$. 

\begin{proposition}\label{prop6}
Let $W' := a \cdot W = [d_1', \dots, d_k']$ for 
some $a \in \N_+$ and let $\displaystyle \HP_{R/I}(t) = \frac{\sum_{j=0}^r a_jt^j}{\prod_{i=1}^k(1-t^{d_i})}$. Then it holds:
\begin{enumerate}

\item 
$P_{R/I}^W(n) = \sum_{j=0}^r a_j P_R^W(n-j)\ \forall\ n\gg0$

\item 

$P_R^{W'}=\{P_0', \dots, P_{ad-1}'\}$ is such that
\[
P_i'(x) =\begin{cases}
0&\text{ if } a \nmid i \; \\
P_{\frac{i}{a}}\left(\frac{x}{a}\right)& \text{ if } a \mid i \;  
\end{cases}
\]

\end{enumerate}
\end{proposition}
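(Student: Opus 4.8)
The plan is to prove both parts by reducing the quasi-polynomial identities to statements about Hilbert functions, using the fact that a constituent polynomial is pinned down by its values on any infinite arithmetic progression; this way the uniqueness in Proposition~\ref{def hilbert quasi-poly} lets me pass freely between functions and polynomials.

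For part (i), first I would note that the Hilbert-Poincar\'e series of the \emph{free} ring $R = \K[x_1, \dots, x_k]$ under $W$ is $\HP_R^W(t) = 1/\prod_{i=1}^k(1-t^{d_i})$, since the $h$-vector of the whole polynomial ring is $1$. Consequently $\HP_{R/I}^W(t) = h(t)\,\HP_R^W(t)$ with $h(t) = \sum_{j=0}^r a_j t^j$. Reading off the coefficient of $t^n$ from this product (a finite convolution, since $r$ is fixed) yields $H_{R/I}^W(n) = \sum_{j=0}^r a_j H_R^W(n-j)$ for every $n$. For $n$ large enough that each shifted argument $n-j$ (with $0 \le j \le r$) lies in the range where $H_R^W$ agrees with its quasi-polynomial, I may replace each $H_R^W(n-j)$ by $P_R^W(n-j)$ and each $H_{R/I}^W(n)$ by $P_{R/I}^W(n)$, giving exactly the claimed identity for all $n \gg 0$.

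For part (ii), the key observation is the degree-scaling: a monomial $x_1^{\alpha_1}\cdots x_k^{\alpha_k}$ has $W'$-degree $a(d_1\alpha_1 + \cdots + d_k\alpha_k)$, i.e.\ exactly $a$ times its $W$-degree. Hence the graded pieces satisfy $(R,W')_n = (R,W)_{n/a}$ when $a \mid n$ and $(R,W')_n = 0$ otherwise, so that
\[
H_R^{W'}(n) = \begin{cases} H_R^W(n/a) & \text{if } a \mid n,\\ 0 & \text{if } a \nmid n. \end{cases}
\]
Since $\mathrm{lcm}(ad_1, \dots, ad_k) = a\,\mathrm{lcm}(d_1,\dots,d_k) = ad$, the quasi-polynomial $P_R^{W'}$ has period $ad$, matching the index range $\{0, \dots, ad-1\}$ in the statement. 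I would then split into the two residue cases. If $a \nmid i$, every $n \equiv i \pmod{ad}$ satisfies $a \nmid n$, so $H_R^{W'}(n) = 0$ for all such $n$, forcing $P_i' \equiv 0$. If $a \mid i$, write $i = a i'$ with $0 \le i' \le d-1$; for $n \equiv i \pmod{ad}$ I can write $n = am$, and tracking residues gives $m \equiv i' \pmod{d}$. Then for $n \gg 0$ one has $H_R^{W'}(n) = H_R^W(m) = P_{i'}(m) = P_{i/a}(n/a)$, and since $P_{i/a}(x/a)$ is a genuine (rational) polynomial in $x$ agreeing with $P_i'$ on an infinite progression, the two polynomials coincide.

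The routine parts are the convolution in (i) and the degree count in (ii); the step requiring the most care will be the residue bookkeeping in (ii) --- verifying that the class of $i$ modulo $ad$ splits correctly according to divisibility by $a$, that $n = am$ translates $i \equiv n \pmod{ad}$ into $i' \equiv m \pmod{d}$, and that the substitution $x \mapsto x/a$ produces a well-defined polynomial whose evaluations at the integers $n$ of the progression reproduce $P_{i'}(n/a)$. The uniqueness assertion of Proposition~\ref{def hilbert quasi-poly} is what finally upgrades the pointwise agreement (valid only for $n \gg 0$) to the stated equality of constituent polynomials.
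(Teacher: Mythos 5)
Your proposal is correct and follows essentially the same route as the paper: part (i) via the convolution identity $H_{R/I}(n)=\sum_{j=0}^r a_j H_R(n-j)$ obtained from $\HP_{R/I}=h(t)\HP_R$, and part (ii) via the observation that $H_R^{W'}(n)$ vanishes when $a\nmid n$ and equals $H_R^W(n/a)$ otherwise, upgraded to an identity of constituent polynomials by agreement on an infinite arithmetic progression. Your explicit residue bookkeeping modulo $ad$ versus $d$ is a slightly more careful write-up of what the paper leaves implicit, but it is not a different argument.
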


%
\begin{proof}
\begin{enumerate}[(i)]
\item
Since \[
\HP_R(t) = \sum_{n \geq 0} H_R(n)t^n = \frac{1}{\prod_{i=1}^k (1-t^{d_i})}
\]
we have
\begin{align*}
\HP_{R/I}(t) &= \frac{h(t)}{\prod_{i=1}^k (1-t^{d_i})} = \left( \sum_{n \geq 0} H_R(n)t^n \right) \left( \sum_{j=0}^r a_jt^j \right) \\ &= \sum_{n \geq 0} \left( \sum_{j=0}^r a_j H_R(n-j) \right) t^n
\end{align*}
with the convention $H_R(n) = 0$ for all $n < 0$. Therefore, for all $n \geq r$, $H_{R/I}(n) = \sum_{j=0}^r a_j H_R(n-j)$ . Given that $H_R(n) = P_R^W(n)$ for all sufficiently large $n$, then $H_R(n-j) = P_R^W(n-j)$ for all sufficiently large $n$ and so $H_{R/I}(n) = \sum_{j=0}^r a_j P_R^W(n-j)$ for all sufficiently large $n$. The statement follows.
\item Let $i$ be a positive integer such that $a$ doesn't divide $i$. We observe that $H_R^{W'}(n) = 0$ for all $n \in \N$ such that $ n \equiv i \bmod ad$. Indeed, assume that $H_R^{W'}(n) \neq 0$ for some $n \equiv i \bmod ad$. Then there exist $a_1, \dots, a_k \in \N$ such that $a_1d_1' + \dots + a_k d_k' = n$. By hypothesis, $a$ divides $d_1',\dots,d_k'$, hence $a$ divides $n$ and so $a$ divides $i$, which is a contradiction. 
Since $P'_i(n) = H_R^{W'}(n)$ for all sufficiently large $n$ such that $n \equiv i \bmod a$, then $P_i'$ has an infinitive number of roots and so $P_i' = 0$.

Let $i$ be a positive integer such that $a$ divides $i$. Recall that for all sufficiently large $n$ such that $n \equiv i \bmod d$, we have
\[
P_i(n) = H_R^{W}(n)= \# \{(a_1,\dots, a_k) \in \N^k | a_1d_1 + \dots a_kd_k = n \}
\]
Then, 
\begin{align*}
P_i' (n)= H_R^{W'}(n) &= \#\{(b_1, \dots, b_k) \in \N^k \mid ab_1d_1 + \dots + ab_kd_k = n\} \\ &= \# \{(b_1, \dots, b_k) \in \N^k \mid b_1d_1 + \dots + b_kd_k = {n}/{a} \} \\
&= H_R^{W}\left(\frac{n}{a}\right) = P_{\frac{i}{a}} \left(\frac{n}{a}\right)
\end{align*}
\end{enumerate}
\end{proof}


To avoid endlessly repeating these hypothesis, we shall use $W$ to denote a weight vector $[d_1, \dots, d_k]$ such that $gcd(d_1,\dots, d_k) = 1$, throughout the remainder of this section.

\subsection{Degree and leading coefficient of the Hilbert quasi-polynomial}
\sectionmark{Degree and leading coefficient}
In this subsection, we investigate some proprieties of the Hilbert quasi-polynomial of $(R,W)$. First of all, we introduce the following notation which will help us to rewrite the denominator of the Hilbert-Poincar\'e series of $(R,W)$ in a suitable way.
Given $d_1,\dots,d_k \in \N_+$, with $d:= lcm(d_1, \dots, d_k)$, we define

\begin{itemize}
\item $\delta:= max\{ |I|  \mid gcd(d_i)_{i \in I} \neq 1 \; \text{and} \; I \subseteq \{1,\dots, k\} \}$,
\item $\hat{d}_s := \frac{d}{d_s}$,
\item $M_s:= \{\hat{d}_s, 2\hat{d}_s, \dots, (d_s -1) \hat{d}_s\}\;$ for all $s=1,\dots,k$,
\item $T_r := \displaystyle\bigcup_{\substack{J \subseteq \{1,\dots,k\} \\ |J| = r}} \left( \bigcap _{s \in J} M_s\right)\;\;$ for all $r=1,\dots,k$. 
\end{itemize}

Observe that $T_r$ is the set of the elements $p \in \bigcup_{i=1}^k M_i$ which belong to exactly $r$ sets $M_i$.

Now we'll analyse the denominator of the Hilbert-Poincar\'e series of $(R,W)$, which we denote by $g(t)$. Let $\zeta := \zeta_d$ be a primitive dth root of unity. We remark that $\zeta_{d_j} = \zeta^{\hat{d}_j}$ for all $j=1,\dots,k$. Then we can write

\begin{equation} \label{eq g(t)}
\begin{split}
g(t) & = \prod_{i=1}^k (1-t^{d_i}) = (1-t)^k \ \prod_{i=1}^k \left[ (1-\zeta^{\hat{d}_i}t) \cdots (1-\zeta^{(d_i-1) \hat{d}_i}t) \right] \\
& = (1-t)^k  \prod_{j \in M_1} (1-\zeta^j t) \cdots \prod_{j \in M_k} (1-\zeta^j t)\\
& = (1-t)^k  \prod_{j \in T_k} (1-\zeta^j t)^k \cdots \prod_{j \in T_1} (1-\zeta^j t) \\
\end{split}
\end{equation}


We are now going to show some lemmas concerning the sets $M_s$ and $T_r$.

\begin{lemma}\label{lemma1}
Let $d_1,\dots,d_k \in \N_+$. For any subset $\{j_1,\dots, j_r \} \subseteq \{1,\dots, k\}$, it holds

\begin{align*}
M_{j_1} \cap \cdots \cap M_{j_r} =  & \left\lbrace \frac{d}{gcd(d_{j_1},\dots,d_{j_r})}, 2 \cdot \frac{d}{gcd(d_{j_1},\dots,d_{j_r})},\dots  \right. \\ & \left. \quad \dots, (gcd(d_{j_1},\dots,d_{j_r})-1) \cdot\frac{d}{gcd(d_{j_1},\dots,d_{j_r})} \right\rbrace
\end{align*}
\end{lemma}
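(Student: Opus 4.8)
The plan is to characterize membership in each $M_s$ in purely arithmetic terms and then collapse the intersection to a single set of multiples, using the classical duality between $gcd$ and $lcm$ for divisors of $d$. First I observe that $M_s$ is exactly the set of positive multiples of $\hat{d}_s = d/d_s$ that are strictly smaller than $d$: indeed $M_s = \{m\hat{d}_s : 1 \le m \le d_s-1\}$, and since $d_s \hat{d}_s = d$, these are precisely the integers $p$ with $0 < p < d$ and $\hat{d}_s \mid p$. Consequently, an element $p$ lies in $M_{j_1} \cap \cdots \cap M_{j_r}$ if and only if $0 < p < d$ and $p$ is a common multiple of $\hat{d}_{j_1}, \dots, \hat{d}_{j_r}$, i.e.\ $p$ is a multiple of $L := lcm(\hat{d}_{j_1}, \dots, \hat{d}_{j_r})$ lying in $\{1,\dots,d-1\}$.

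The key step is then to evaluate $L$. I claim that
\[
L = lcm\!\left(\frac{d}{d_{j_1}}, \dots, \frac{d}{d_{j_r}}\right) = \frac{d}{gcd(d_{j_1},\dots,d_{j_r})}.
\]
This I would prove by a prime-by-prime valuation argument: for each prime $p$, writing $e := v_p(d)$ and $\alpha_\ell := v_p(d_{j_\ell})$ (so that $\alpha_\ell \le e$, as every $d_{j_\ell}$ divides $d$), one has $v_p(d/d_{j_\ell}) = e - \alpha_\ell$, and therefore
\[
v_p(L) = \max_{\ell}\,(e - \alpha_\ell) = e - \min_{\ell} \alpha_\ell = e - v_p\bigl(gcd(d_{j_1},\dots,d_{j_r})\bigr) = v_p\!\left(\frac{d}{gcd(d_{j_1},\dots,d_{j_r})}\right).
\]
Since the valuations agree at every prime, the two integers coincide.

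Finally, setting $g := gcd(d_{j_1},\dots,d_{j_r})$ so that $L = d/g$, the intersection $M_{j_1}\cap\cdots\cap M_{j_r}$ consists exactly of the multiples of $d/g$ in $\{1,\dots,d-1\}$. These are $d/g,\, 2\,d/g,\, \dots,\, (g-1)\,d/g$, since $g\cdot(d/g)=d$ is the first multiple failing the bound $p<d$; this is precisely the set in the statement.

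I expect the only genuine content to be the valuation identity $lcm(d/d_{j_1},\dots,d/d_{j_r}) = d/gcd(d_{j_1},\dots,d_{j_r})$; everything else is bookkeeping about which multiples fall in the open range $(0,d)$. The one point to check carefully is the boundary of that range, namely that the largest multiple of $d/g$ strictly below $d$ is $(g-1)\,d/g$, so that the upper index in the claimed set is exactly $g-1$.
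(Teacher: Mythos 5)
Your proof is correct, but it takes a genuinely different route from the paper's. The paper argues by induction on $r$: it uses the inductive hypothesis to identify $M_{j_1}\cap\cdots\cap M_{j_{r-1}}$ with a set of the same form as $M_a$ for $a=\gcd(d_{j_1},\dots,d_{j_{r-1}})$, and then intersects once more with $M_{j_r}$, concluding by associativity of the gcd. You instead argue directly and all at once: each $M_s$ is exactly the set of multiples of $\hat{d}_s$ in the open range $(0,d)$, so the intersection is the set of multiples of $L=\mathrm{lcm}(\hat{d}_{j_1},\dots,\hat{d}_{j_r})$ in that range, and the identity $\mathrm{lcm}(d/d_{j_1},\dots,d/d_{j_r})=d/\gcd(d_{j_1},\dots,d_{j_r})$, which you verify by comparing $p$-adic valuations, collapses this to the claimed list; your final check that the largest admissible multiple is $(g-1)\cdot d/g$ is exactly the right boundary point to worry about, and it also covers the degenerate case $g=1$, where both sides are empty. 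Your approach has the advantage of isolating the one piece of genuine arithmetic content (the gcd/lcm duality for divisors of $d$) and proving it cleanly, whereas the paper's induction leaves that content buried in the two-set case: since the base case $r=1$ is trivial, the step ``$M_a\cap M_{j_r}$ by the induction hypothesis'' really requires the $r=2$ computation to be carried out by hand, and the remark that one may ``suppose $a$ is one of the given integers'' quietly applies the hypothesis to a modified weight list. Both arguments reach the same conclusion; yours is the more self-contained of the two.
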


\begin{proof}
We use induction on $r$, the number of intersected subsets.
The case $r = 1$ is trivial. Suppose $r \geq 2$ and consider, without loss of generality, the subsets $M_1, \dots, M_r$ instead of $M_{j_1},\dots, M_{j_r}$. By induction hypothesis, we have
\begin{align*}
M_{1} \cap \cdots \cap M_{r-1} = & \left\lbrace\frac{d}{gcd(d_{1},\dots,d_{r-1})}, 2 \cdot \frac{d}{gcd(d_{1},\dots,d_{r-1})},\dots \right. \\ & \left. \dots, (gcd(d_{1},\dots,d_{r-1})-1) \cdot\frac{d}{gcd(d_{1},\dots,d_{r-1})} \right\rbrace
\end{align*}
We observe that $M_1 \cap \dots \cap M_{r-1}$ has the same structure of $M_a$, where $a$ is equal to $gcd (d_1,\dots,d_{r-1})$ (without loss of generality we can suppose that $a$ is one of the given integers). Therefore $M_1 \cap \dots \cap M_{r} = M_a \cap M_{r}$ and, by using again the induction hypothesis, we obtain
\begin{align*}
M_1 \cap \dots \cap M_{r}= \left\lbrace \frac{d}{gcd(a,d_{r})}, 2 \cdot \frac{d}{gcd(a,d_{r})},\dots, (gcd(a,d_{r})-1) \cdot\frac{d}{gcd(a,d_{r})} \right\rbrace
\end{align*}
and since $a= gcd (d_1,\dots,d_{r-1})$, we are done.
\end{proof}

\begin{lemma}\label{lemma2}
Let $d_1,\dots,d_k \in \N_+$. Then $T_{\delta +1} = \dots = T_k = \emptyset$.
\end{lemma}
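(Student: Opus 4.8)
The plan is to reduce the claim to the emptiness of each individual $r$-fold intersection and then combine Lemma \ref{lemma1} with the extremal definition of $\delta$. Since, by definition, $T_r = \bigcup_{|J|=r} \bigcap_{s \in J} M_s$, where the union runs over all $J \subseteq \{1,\dots,k\}$ with $|J| = r$, it suffices to show that for every $r$ with $\delta+1 \le r \le k$ and every such subset $J$ the intersection $\bigcap_{s \in J} M_s$ is empty; a union of empty sets is then empty, giving $T_r = \emptyset$.

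First I would fix such an $r$ and an $r$-subset $J = \{j_1,\dots,j_r\}$ and apply Lemma \ref{lemma1}, which describes the intersection explicitly as
\[
\bigcap_{s \in J} M_s = \left\{ \frac{d}{g},\ 2\cdot\frac{d}{g},\ \dots,\ (g-1)\cdot\frac{d}{g} \right\}, \qquad g := gcd(d_{j_1},\dots,d_{j_r}).
\]
The key observation is that this listed set is empty precisely when $g = 1$: in that case the running index goes from $1$ to $g-1 = 0$, so no multiple of $d/g$ is actually listed and the set is empty (whereas for $g \ge 2$ it contains the element $d/g$).

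It then remains to verify that $g = 1$ for every admissible $J$, and this is exactly where the hypothesis $r \ge \delta+1$ is used. Since $|J| = r > \delta$ and $\delta$ is by definition the largest cardinality of a subset of $\{1,\dots,k\}$ whose corresponding entries have $gcd$ different from $1$, the subset $J$ cannot satisfy $gcd(d_{j_1},\dots,d_{j_r}) \neq 1$. Hence $g = 1$, the intersection $\bigcap_{s \in J} M_s$ is empty, and therefore $T_r = \emptyset$ for every $r$ in the range $\delta+1 \le r \le k$.

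I do not anticipate a genuine obstacle: the argument is essentially bookkeeping that couples the explicit description furnished by Lemma \ref{lemma1} with the maximality built into the definition of $\delta$. The only point that deserves a moment of care is the degenerate reading of the set in Lemma \ref{lemma1} when $g = 1$, namely recognizing that an empty index range yields the empty set rather than a singleton; once this is noted, the conclusion is immediate.
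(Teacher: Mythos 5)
Your proof is correct and follows essentially the same route as the paper: both invoke Lemma \ref{lemma1} together with the maximality of $\delta$ to conclude that the relevant $\gcd$ equals $1$, and hence that each $r$-fold intersection is empty. The only cosmetic difference is that the paper phrases the final step as a contradiction (an element would be a multiple of $d$ strictly less than $d$), whereas you read off directly that the index range $1 \le p_1 \le g-1 = 0$ is empty.
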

\begin{proof}
We assume that $T_h \neq \emptyset$ for some $h=\delta+1, \dots, k$. Consider $p \in T_h$ and suppose, without loss of generality, that $p \in M_1 \cap \dots \cap M_h$. By lemma \ref{lemma1}, $p = p_1 \cdot \frac{d}{gcd(d_1, \dots, d_h)}$ with $1 \leq p_1 \leq gcd(d_1, \dots, d_h) -1$. Since $\delta< h $, we must have $gcd(d_1, \dots, d_h) = 1$. Therefore $p$ is a multiple of $d$, which is a contradiction because of $p < d$.
\end{proof}

By lemma \ref{lemma2}, in Equation (\ref{eq g(t)}) the set $T_{\delta +1}, \dots, T_k$ are empty, so ultimately we have

\begin{equation} \label{g(t)}
g(t) = \prod_{i=1}^k (1-t^{d_i}) = (1-t)^k  \left[ \prod_{j \in T_{\delta}} (1-\zeta^j t)^{\delta}\right] \cdots \left[ \prod_{j \in T_1} (1-\zeta^j t)\right]
\end{equation}

and we obtain the following useful expression for the Hilbert-Poincar\'e series
\begin{equation}\label{eq HP}
\begin{split}
\HP_{R} (t) = \frac{1}{g(t)} = \frac{Q_0(t)}{(1-t)^k} + \sum_{j \in T_{\delta}} \frac{Q_j^{(\delta)}(t)}{(1-\zeta^j t)^{\delta}} + \dots + \sum_{j \in T_1} \frac{Q_j^{(1)}(t)}{(1-\zeta^j t)}\\
\end{split}
\end{equation}

\begin{remark}\label{remark1}
We have already seen in the proof of Proposition \ref{def hilbert quasi-poly} that any addend $\frac{Q_j(t)}{(1- \zeta^j t)^{\alpha_j}}$ of $\HP_R (t)$, with $Q_j(t) = \sum _{h=0}^{n_j} a_{jh}t^h$, can be written in the following way

\begin{equation} \label{eq1}
\frac{Q_j(t)}{(1- \zeta^j t)^{\alpha_j}} = \sum_{n\geq 0} \left[ \sum_{h=0}^{n_j} a_{jh}\binom{n+ \alpha_j -h-1}{\alpha_j -1} \zeta^{j(n-h)} \right] t^n 
\end{equation}
\

\noindent
Then the rational function $\frac{Q_j(t)}{(1- \zeta^j t)^{\alpha_j}}$ corresponds to a power series in $t$ where the nth coefficient depends on n in a polynomial way through the binomial $\binom{n+ \alpha_j -h-1}{\alpha_j -1}$ and its degree in $n$ is $\alpha_j-1$.
\end{remark}

We consider the equation (\ref{eq HP}) of the Hilbert-Poincar\'e series and, by using partial fractions, we get
\begin{equation}\label{eq2 HP}
\begin{split}
\HP_R(t) = \frac{1}{g(t)} &= \frac{A_1}{1-t} + \frac{A_2}{(1-t)^2} + \dots + \frac{A_k}{(1-t)^k}+ \\ &+ \sum_{j \in T_\delta} \left[ \frac{B_{j,1}^{(\delta)}}{1-\zeta^jt} + \dots + \frac{B_{j,\delta}^{(\delta)}}{(1-\zeta^j t)^ \delta} \right] + \dots + \sum_{j \in T_1} \left[ \frac{B_{j,1}^{(1)}}{1-\zeta^j t} \right]
\end{split}
\end{equation}
for some $A_i, B_{j,h}^{(r)} \in \Q$.

\begin{remark}
By multiplying both sides of Equation (\ref{eq2 HP}) by $g(t)$, we get
\begin{equation}\label{1 = ...}
\begin{split}
&1 = \sum_{t=1}^k \left[ A_t (1-t)^{k-t} \cdot \prod_{j \in T_\delta} (1-\zeta^j t)^\delta \cdots \prod_{j \in T_1} (1-\zeta^j t) \right] + \\
&+\sum_{j \in T_\delta} \left\lbrace (1-t)^k \cdot \prod_{i=1}^{\delta -1} \left[ \prod_{r \in T_i} (1-\zeta^jt)^i \right] \cdot \sum_{m=1}^k \left[  B_{j,m}^{(\delta)} \cdot \prod_{\substack{s \in T_{\delta} \\ s \neq  j}} (1-\zeta^s t )^{\delta - m}   \right] \right\rbrace + \\
&\; +\\
& \; \;\vdots \\
&\; + \\
&+\sum_{j \in T_1} \left[ B_{j,1}^{(1)}(1-t)^k \cdot \prod_{r \in T_\delta} (1-\zeta^r t)^{\delta} \cdots \prod_{r \in T_2} (1-\zeta^r)^2 \prod_{\substack{s \in T_1 \\ s \neq j}}(1-\zeta^st)\right]
\end{split}
\end{equation}
\end{remark}

\begin{remark}
By Equation (\ref{eq2 HP}), we have 
\begin{equation} \label{Q_0(t)}
Q_0(t) = A_1 (1-t)^{k-1} + A_2 (1-t)^{k-2} + \dots + A_k
\end{equation}
\end{remark}

\begin{proposition}
Let $(R,W)$ and $P_R^W$ be as above. The degree of $P_i$ is equal to $k-1$ for all $i=0, \dots, d-1$ and its leading coefficient $lc(P_i)$ is such that 
\[
lc(P_i) = \frac{1}{(k-1)!\prod_{i=1}^k d_i}
\]
\end{proposition}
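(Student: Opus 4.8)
The plan is to read off both the degree and the leading coefficient directly from the partial fraction decomposition (\ref{eq2 HP}) of $\HP_R(t)$, using the dictionary between poles and polynomial growth recorded in Remark \ref{remark1}. By that remark, a summand with denominator $(1-\zeta^j t)^{\alpha_j}$ contributes to the coefficient of $t^n$ a quantity that is polynomial in $n$ of degree exactly $\alpha_j - 1$. Since $H_R(n)$ is the sum of all these contributions, the degree of each $P_i$ is governed by the largest pole order occurring in (\ref{eq2 HP}), and its leading behaviour is dictated by the term realizing that order.

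First I would isolate the dominant term. The denominator $g(t)$ contains $(1-t)^k$ as a factor, and by Lemma \ref{lemma2} together with the standing hypothesis $gcd(d_1, \dots, d_k) = 1$ we have $\delta \leq k-1$; hence every factor $(1-\zeta^j t)$ with $j \neq 0$ occurs in $g(t)$ with multiplicity at most $\delta \leq k-1$. Consequently, in (\ref{eq2 HP}) the summand $\frac{A_k}{(1-t)^k}$ is the \emph{unique} term of maximal pole order $k$: the remaining non-periodic terms $\frac{A_m}{(1-t)^m}$ have $m < k$, and all periodic terms (those with $\zeta^j$, $j \neq 0$) have pole order at most $\delta \leq k-1$. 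By Remark \ref{remark1}, both families contribute polynomials in $n$ of degree at most $k-2$. Expanding $\frac{A_k}{(1-t)^k} = A_k \sum_{n \geq 0} \binom{n+k-1}{k-1} t^n$ shows that this single term contributes $A_k \binom{n+k-1}{k-1}$, a polynomial in $n$ of degree $k-1$ with leading coefficient $\frac{A_k}{(k-1)!}$; crucially this contribution is identical for every residue class of $n$ modulo $d$, so it appears unchanged in each $P_i$.

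It then remains to compute the constant $A_k$ and to verify it is nonzero, so that the degree is exactly $k-1$ rather than smaller. The standard residue computation for partial fractions gives $A_k = \lim_{t \to 1} (1-t)^k \, \HP_R(t)$. Writing $1 - t^{d_i} = (1-t)(1 + t + \dots + t^{d_i - 1})$ and cancelling the $k$ factors of $(1-t)$ yields
\[
A_k = \lim_{t \to 1} \frac{1}{\prod_{i=1}^k (1 + t + \dots + t^{d_i - 1})} = \frac{1}{\prod_{i=1}^k d_i},
\]
since each factor evaluates to $d_i$ at $t = 1$. In particular $A_k > 0$, whence $\deg P_i = k-1$ and $lc(P_i) = \frac{A_k}{(k-1)!} = \frac{1}{(k-1)! \prod_{i=1}^k d_i}$ for every $i$, as claimed.

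I anticipate that the only delicate point is the bookkeeping guaranteeing that no periodic term can reach pole order $k$: this is precisely where the hypothesis $gcd(d_1, \dots, d_k) = 1$ is used, through the bound $\delta \leq k-1$ and Lemma \ref{lemma2}. Once that is in place, the uniformity of the leading coefficient across all residues and the evaluation of $A_k$ are routine unpackings of the geometric-series expansions already assembled in the preceding remarks.
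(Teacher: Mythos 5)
Your proof is correct and follows essentially the same route as the paper: isolate the unique pole of order $k$ at $t=1$ (all other terms contribute degree at most $k-2$ in $n$ by Remark \ref{remark1}, since $\delta\le k-1$ under the standing hypothesis $\gcd(d_1,\dots,d_k)=1$), read off the leading coefficient $A_k/(k-1)!$, and evaluate $A_k=1/\prod_{i=1}^k d_i$ using $1-t^{d_i}=(1-t)(1+t+\cdots+t^{d_i-1})$ at $t=1$. The only cosmetic difference is that you obtain $A_k$ as the limit $\lim_{t\to 1}(1-t)^k\HP_R(t)$, whereas the paper's Lemma \ref{formula_Ak} performs the identical cancellation after clearing denominators in Equation (\ref{1 = ...}) and setting $t=1$.
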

\begin{proof}
Since $\delta < k$ by hypothesis and thanks to Equation (\ref{eq HP}) and remark \ref{remark1}, it suffices to analyse the term
\[
\frac{Q_0(t)}{(1-t)^k} = \sum_{n \geq 0} \left[ \sum_{h=0}^{k-1} a_h \binom{n-h+k-1}{k-1} \right] t^n
\]
\

where $a_h := a_{0,h}$ for all $h = 0, \dots, k-1$.

We observe that $\sum_{h=0}^{k-1} a_h \binom{n-h+k-1}{k-1} = S_0(n)$ and then the degree of $P_i$ is less or equal to $k-1$. It remains to show that the coefficient of $n^{k-1}$ is not 0. In particular, we wish to show that it is equal to $\frac{1}{(k-1)!\prod_{i=1}^k d_i}$.

Suppose $n \geq k-1$, then the nth coefficient of $\HP_R(t)$ is equal to 
\begin{equation*}
\begin{split}
S_0(n) &= \sum_{h=0}^{k-1} a_h \binom{n-h+k-1}{k-1} \\
&= \sum_{h=0}^{k-1} a_h \frac{(n-h+k-1)(n-h+k-2)\cdots(n-h+1)}{(k-1)!} \\
&= \sum_{h=0}^{k-1} a_h \cdot \frac{n^{k-1}}{(k-1)!} + \text{terms of degree in n lower than } (k-1)
\end{split}
\end{equation*}
Then the leading coefficient of $S_0(n)$ is
\begin{equation}\label{last_step_leadcoeff}
\frac{1}{(k-1)!} \sum_{h=0}^{k-1} a_h = \frac{Q_0(1)}{(k-1)!} = \frac{A_k}{(k-1)!} = \frac{1}{(k-1)!\prod_{i=1}^k d_i}
\end{equation}
where the last equality in (\ref{last_step_leadcoeff}) follows by the Lemma \ref{formula_Ak}.
\end{proof}

\begin{lemma}\label{formula_Ak}
The constant $A_k$ which appears in Equation (\ref{eq2 HP}) and (\ref{Q_0(t)}) is such that
\[
A_k = \frac{1}{\prod_{i=1}^k d_i}.
\]
\end{lemma}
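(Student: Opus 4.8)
The plan is to identify $A_k$ as the coefficient governing the highest-order pole of $\HP_R(t) = 1/g(t)$ at $t = 1$, and to compute it by a standard residue/limit argument. The key preliminary observation is that each factor of $g(t) = \prod_{i=1}^k(1 - t^{d_i})$ splits as $1 - t^{d_i} = (1-t)(1 + t + \cdots + t^{d_i - 1})$, so $(1-t)^k$ is exactly the multiplicity of the root $t = 1$ in $g(t)$. Writing $g(t) = (1-t)^k\, G(t)$ with $G(t) := \prod_{i=1}^k (1 + t + \cdots + t^{d_i-1})$, the polynomial $G$ is regular and nonvanishing at $t=1$. Comparing with the factorization in Equation (\ref{g(t)}) shows moreover that $G(t) = \prod_{j \in T_\delta}(1-\zeta^j t)^\delta \cdots \prod_{j \in T_1}(1-\zeta^j t)$, so the two expressions for $G(1)$ will have to be matched.

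Next I would extract $A_k$ directly from Equation (\ref{1 = ...}), which is obtained by clearing denominators in the partial fraction decomposition (\ref{eq2 HP}). Setting $t = 1$ in that identity, every summand carrying a factor $(1-t)^k$ — that is, all the contributions coming from the poles at the nontrivial roots $\zeta^j$ — vanishes, while in the first sum the term $A_m (1-t)^{k-m}\,G(t)$ survives only for $m = k$, since $(1-t)^{k-m}$ vanishes at $t=1$ for every $m < k$. Equivalently, one may multiply (\ref{eq2 HP}) by $(1-t)^k$ and let $t \to 1$. Either way, this leaves the single relation $1 = A_k\, G(1)$.

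It then remains to evaluate $G(1)$, which is immediate from the product form $G(t) = \prod_{i=1}^k(1 + t + \cdots + t^{d_i-1})$: each factor equals $d_i$ at $t = 1$, so $G(1) = \prod_{i=1}^k d_i$, and therefore $A_k = 1/\prod_{i=1}^k d_i$, as claimed.

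I do not expect a genuine obstacle here; the only points requiring care are, first, confirming that the pole of $1/g(t)$ at $t=1$ has order exactly $k$, so that $G(1) \neq 0$ and the limit is legitimate, and, second, that none of the remaining poles $\zeta^j$ coincides with $t=1$. The latter holds because every $j \in \bigcup_{i} M_i$ is a positive multiple of some $\hat d_s$ with $0 < j \leq (d_s-1)\hat d_s = d - \hat d_s < d$, whence $\zeta^j \neq 1$ as $\zeta$ is a primitive $d$th root of unity. Both facts ensure that the residue extraction isolates $A_k$ cleanly.
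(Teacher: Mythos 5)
Your argument is correct and follows essentially the same route as the paper: evaluate the cleared-denominator identity (\ref{1 = ...}) at $t=1$ to isolate $A_k$, then compute $\prod_{j \in T_\delta}(1-\zeta^j)^{\delta}\cdots\prod_{j \in T_1}(1-\zeta^j)$ by matching it with $\prod_{i=1}^k(1+t+\cdots+t^{d_i-1})$ evaluated at $t=1$, exactly as in Equation (\ref{g(t)/(1-t)^k}). Your added checks that the pole at $t=1$ has order exactly $k$ and that no $\zeta^j$ equals $1$ are sound but do not change the approach.
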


\begin{proof}
By evaluating Equation (\ref{1 = ...}) at $t = 1$, we get
\[
1 = A_k \cdot \prod_{j \in T_\delta} (1-\zeta^j)^{\delta}\cdots \prod_{j \in T_1} (1-\zeta^j)
\]
If we prove that $\prod_{j \in \delta} (1-\zeta^j)^{\delta} \cdots \prod_{j \in T_1} (1-\zeta^j)$ is equal to $\prod_{i=1}^k d_i$, we are done.  For this purpose, we consider the following two equations, the first of which derives from equation (\ref{g(t)}):
\begin{equation*}
\begin{split}
&\frac{g(t)}{(1-t)^k}=  \prod_{j \in T_\delta} (1-\zeta^jt)^{\delta} \cdots \prod_{j \in T_1} (1-\zeta^jt)  \\
&\frac{g(t)}{(1-t)^k}= \prod_{j=1}^k \frac{1-t^{d_j}}{1-t} = \prod_{j=1}^k (1+t+\dots+t^{d_j -1})
\end{split}
\end{equation*}
Then, it holds
\begin{equation} \label{g(t)/(1-t)^k}
\prod_{j \in T_\delta} (1-\zeta^jt)^{\delta} \cdots \prod_{j \in T_1} (1-\zeta^jt)=  \prod_{j=1}^k (1+t+\dots+t^{d_j -1})
\end{equation}
By evaluating both sides of Equation (\ref{g(t)/(1-t)^k}) at $t = 1$, the statement follows.
\end{proof}

Now, we'll give a degree bound for the Hilbert quasi-polynomial of \, $(R/I, W)$, when $I \neq (0)$.

\begin{proposition}\label{degree P case R/I}
Let $I \neq (0)$ a $W$-homogeneous ideal of $R$ and let $P_{R/I}^W$ the Hilbert quasi-polynomial of $(R/I,W)$. Then the degree of $P_i$ is less or equal to $k-2$.
\end{proposition}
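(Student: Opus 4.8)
The plan is to reduce everything to the leading ($n^{k-1}$) coefficient of each $P_i$ and to show that it vanishes. By Proposition \ref{prop6}(i), writing $h(t)=\sum_{j=0}^r a_j t^j$ for the numerator of $\HP_{R/I}(t)$, we have $P_{R/I}^W(n)=\sum_{j=0}^r a_j P_R^W(n-j)$ for $n\gg 0$. Fixing a residue class $i$ and restricting to $n\equiv i \bmod d$, each summand $P_R^W(n-j)$ is a genuine polynomial in $n$ of degree $k-1$ whose leading coefficient, by the preceding Proposition, equals $c:=\frac{1}{(k-1)!\prod_{i=1}^k d_i}$, independently of $j$ and of the class. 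Hence the coefficient of $n^{k-1}$ in $P_i$ is $c\,h(1)$, the same for every $i$, and it suffices to prove that this leading coefficient is $0$ (equivalently $h(1)=0$).

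Since Hilbert functions are nonnegative, $P_i(n)=H_{R/I}(n)\geq 0$ for all large $n\equiv i$, so the leading coefficient $c\,h(1)$ cannot be negative. To obtain the reverse inequality I would produce an upper bound on $H_{R/I}$. Because $R$ is an integral domain and $I\neq(0)$, I would pick a nonzero $W$-homogeneous element $f\in I$ of some degree $e$; then $(f)\subseteq I$, and since multiplication by $f$ is injective and raises degree by $e$, we get $\dim_\K (f)_n=\dim_\K R_{n-e}=H_R(n-e)$, whence $\dim_\K I_n\geq H_R(n-e)$. The graded exact sequence $0\to I\to R\to R/I\to 0$ then yields $H_{R/I}(n)=H_R(n)-\dim_\K I_n\leq H_R(n)-H_R(n-e)$.

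Finally I would analyse the right-hand side class by class. For $n\equiv i \bmod d$ and $n\gg 0$ we have $H_R(n)=P^R_i(n)$ and $H_R(n-e)=P^R_{(i-e)\bmod d}(n-e)$, both polynomials in $n$ of degree $k-1$ with the same leading coefficient $c$, so their difference has degree at most $k-2$. Thus $0\leq P_i(n)\leq (\text{a polynomial of degree} \leq k-2)$ for all large $n\equiv i$, which is incompatible with $\deg P_i=k-1$: a positive leading coefficient would break the upper bound and a negative one the lower bound as $n\to\infty$. Therefore $\deg P_i\leq k-2$ for every $i$. The one point requiring care is the residue-class bookkeeping in the comparison $H_R(n)-H_R(n-e)$: the shift by $e$ moves between the polynomials $P^R_i$ and $P^R_{(i-e)\bmod d}$, and the argument relies crucially on the preceding Proposition guaranteeing that all $d$ of these share the identical leading coefficient $c$, so that the $n^{k-1}$ terms cancel exactly.
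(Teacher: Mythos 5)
Your proof is correct, but it takes a genuinely different route from the paper's. The paper works inside the partial-fraction decomposition of $\HP_{R/I}(t)=f(t)/g(t)$: Lemma \ref{C_k in HP_R/I} shows that the coefficient $C_k$ of $\frac{1}{(1-t)^k}$ vanishes, by reducing to $f(1)=0$ and proving that by induction on a minimal set of monomial generators via the $h$-vector recursion $<(m_1,\dots,m_s)>\;=\;<(m_1,\dots,m_{s-1})>-\,t^{\deg_W(m_s)}<(m_1,\dots,m_{s-1}):(m_s)>$; once no $(1-t)^k$ term survives, every $S_j$ in the construction of Proposition \ref{def hilbert quasi-poly} has degree at most $k-2$. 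You instead identify the leading coefficient of each $P_i$ as $c\,h(1)$ with $c=\frac{1}{(k-1)!\prod d_i}$ via Proposition \ref{prop6}(i), and then kill it by a sandwich: $0\le H_{R/I}(n)\le H_R(n)-H_R(n-e)$, where the upper bound comes from the injection $R_{n-e}\hookrightarrow I_n$ given by multiplication by a nonzero homogeneous $f\in I$ of degree $e$, and the right-hand side has degree $\le k-2$ precisely because all $d$ polynomials of $P_R^W$ share the leading coefficient $c$ — you rightly flag the residue-class bookkeeping as the delicate point, and it does work. Your argument is more elementary and self-contained (no $h$-vector identities, no reduction to monomial ideals, no induction on generators), and as a by-product it yields $h(1)=0$ from growth considerations alone; the paper's argument buys the sharper structural fact $C_k=0$ inside the decomposition (\ref{eq2 HP_R/I}), which is the form that meshes with the subsequent computations of $A_k$, $A_{k-1}$ and the coefficient formulas. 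Both are valid proofs of the stated degree bound.
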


In order to prove Proposition \ref{degree P case R/I}, we need the following results. Let $I \neq (0)$ be a homogeneous monomial ideal and consider the Hilbert-Poincar\'e series $\HP_{R/I}(t) = \frac{f(t)}{\prod_{i=1}^k(1-t^{d_i})} = \frac{f(t)}{g(t)}$ of $(R/I,W)$.
Let $f(t) := q(t)g(t) + r(t)$, where $q(t), r(t) \in \Q(t)$ and $\deg r < \deg g$. Then, we can write

\begin{equation}\label{eq2 HP_R/I}
\begin{split}
\HP_{R/I}(t) &= \frac{f(t)}{g(t)} = q(t) + \frac{r(t)}{g(t)} = q(t) + \left\lbrace \frac{C_1}{1-t} + \frac{C_2}{(1-t)^2} + \dots + \frac{C_k}{(1-t)^k}+ \right. \\ & \left.+ \sum_{j \in T_\delta} \left[ \frac{D_{j,1}^{(\delta)}}{1-\zeta^jt} + \dots + \frac{D_{j,\delta}^{(\delta)}}{(1-\zeta^j t)^ \delta} \right] + \dots + \sum_{j \in T_1} \left[ \frac{D_{j,1}^{(1)}}{1-\zeta^j t} \right] \right\rbrace
\end{split}
\end{equation}
for some $C_i, D_{j,h}^{(r)} \in \Q$.

\begin{lemma}\label{C_k in HP_R/I}
The constant $C_k$ which appears in Equation (\ref{eq2 HP_R/I}) is zero.
\end{lemma}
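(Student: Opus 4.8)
The plan is to read off the leading asymptotic behaviour of $H_{R/I}(n)$ directly from the partial fraction decomposition (\ref{eq2 HP_R/I}) and to show that this growth is incompatible with $C_k \neq 0$. By Remark \ref{remark1}, each summand $\frac{D_{j,\ell}^{(r)}}{(1-\zeta^j t)^\ell}$ contributes to the coefficient of $t^n$ a term that is polynomial in $n$ of degree $\ell-1$; since $\zeta^j \neq 1$ for $j \in T_r$ and the exponents $\ell$ occurring there are at most $\delta$, while $\delta \leq k-1$ by Lemma \ref{lemma2} (because $\gcd(d_1,\dots,d_k)=1$), all of these contribute $O(n^{\delta-1}) = O(n^{k-2})$. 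Likewise the terms $\frac{C_1}{1-t},\dots,\frac{C_{k-1}}{(1-t)^{k-1}}$ contribute $O(n^{k-2})$, and the polynomial $q(t)$ contributes nothing once $n > \deg q$. Hence, for all large $n$,
\[
H_{R/I}(n) = C_k \binom{n+k-1}{k-1} + O(n^{k-2}) = \frac{C_k}{(k-1)!}\, n^{k-1} + O(n^{k-2}).
\]
Thus it suffices to prove that $H_{R/I}(n) = O(n^{k-2})$, i.e. that $H_{R/I}$ grows strictly slower than $n^{k-1}$.

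To obtain this growth bound I would exploit that $I \neq (0)$. Since $I$ is a monomial ideal, fix a nonzero monomial $m \in I$ and put $\delta_0 := \deg_W(m) \geq 1$ (if $m$ is a unit then $R/I = 0$ and the statement is trivial). As $R$ is a domain, multiplication by $m$ is an injective $\K$-linear map $R_n \hookrightarrow R_{n+\delta_0}$ whose image lies in $I_{n+\delta_0}$; therefore $\dim_\K I_{n+\delta_0} \geq \dim_\K R_n = H_R(n)$, and consequently
\[
H_{R/I}(n) = H_R(n) - \dim_\K I_n \leq H_R(n) - H_R(n-\delta_0) \qquad (n \geq \delta_0).
\]

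Finally I would argue that the right-hand side is $O(n^{k-2})$. By the Proposition preceding Lemma \ref{formula_Ak}, for $n \gg 0$ the value $H_R(n)$ equals $P_i(n)$, a polynomial of degree $k-1$ whose leading coefficient $\frac{1}{(k-1)!\prod_{i=1}^k d_i}$ is the \emph{same} for every residue class $i$. Writing $n \equiv i$ and $n-\delta_0 \equiv i'$ modulo $d$, the two degree-$(k-1)$ terms of $P_i(n) - P_{i'}(n-\delta_0)$ equal $\frac{n^{k-1}}{(k-1)!\prod_{i=1}^k d_i}$ and $\frac{(n-\delta_0)^{k-1}}{(k-1)!\prod_{i=1}^k d_i}$, which cancel; hence $H_R(n) - H_R(n-\delta_0)$ is a quasi-polynomial of degree at most $k-2$. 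Combining with the inequality above gives $H_{R/I}(n) = O(n^{k-2})$, and comparing with the asymptotic expansion of the first paragraph forces $C_k = 0$. The main obstacle is precisely this last cancellation: it is what makes the dimension drop visible, and it relies essentially on the leading coefficient of $P_R^W$ being independent of the residue class. I would also record the shorter, less elementary alternative obtained by computing $C_k = \lim_{t\to 1}(1-t)^k\, \HP_{R/I}(t) = \frac{f(1)}{\prod_{i=1}^k d_i}$ (using $g(t)/(1-t)^k\big|_{t=1} = \prod_{i=1}^k d_i$, exactly as in Lemma \ref{formula_Ak}); then $C_k = 0$ is equivalent to $f(1)=0$, which holds because $R$ being a domain together with $I \neq (0)$ forces $\dim R/I < k$, so that the order of the pole of $\HP_{R/I}$ at $t=1$ is strictly less than $k$.
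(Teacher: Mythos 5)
Your argument is correct, but it proves the lemma by a genuinely different route than the paper. The paper clears denominators in Equation (\ref{eq2 HP_R/I}), evaluates at $t=1$ to get $r(1)=C_k\prod_{i=1}^k d_i$, and then shows $f(1)=0$ (hence $r(1)=0$) by induction on the number of minimal monomial generators of $I$, using the h-vector recursion $\langle(m_1,\dots,m_s)\rangle=\langle(m_1,\dots,m_{s-1})\rangle-t^{\deg_W(m_s)}\langle(m_1,\dots,m_{s-1}):(m_s)\rangle$. You instead read $C_k$ off as the coefficient of $n^{k-1}$ in the asymptotics of $H_{R/I}(n)$ and kill it with the growth bound $0\le H_{R/I}(n)\le H_R(n)-H_R(n-\deg_W m)=O(n^{k-2})$, which follows from the injection given by multiplication by a monomial $m\in I$ together with the already-proved fact that every $P_i$ has the same leading coefficient $\frac{1}{(k-1)!\prod_i d_i}$. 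Your approach avoids the h-vector recursion and the induction entirely, reuses only results the paper has already established, and yields the extra quantitative inequality on $H_{R/I}$; the paper's approach is purely algebraic and localizes the whole issue in the single identity $f(1)=0$, which is reusable elsewhere. Two small points: the bound $\delta\le k-1$ follows directly from the standing hypothesis $\gcd(d_1,\dots,d_k)=1$ and the definition of $\delta$ (Lemma \ref{lemma2} is not really the right citation), and your closing ``shorter alternative'' is in fact the paper's own reduction $C_k=f(1)/\prod_i d_i$ --- but the justification you offer there (pole order at $t=1$ equals $\dim R/I<k$) invokes a dimension-theoretic fact for non-standard gradings that the paper never establishes, so it should be treated as a remark rather than a proof; your main argument does not suffer from this.
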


\begin{proof}  
By Equation (\ref{eq2 HP_R/I}), we get 

\begin{equation*}
\begin{split}
&r(t) = \sum_{t=1}^k \left[ C_t (1-t)^{k-t} \cdot \prod_{j \in T_\delta} (1-\zeta^j t)^\delta \cdots \prod_{j \in T_1} (1-\zeta^j t) \right] + \\
&+\sum_{j \in T_\delta} \left\lbrace (1-t)^k \cdot \prod_{i=1}^{\delta -1} \left[ \prod_{r \in T_i} (1-\zeta^jt)^i \right] \cdot \sum_{m=1}^k \left[ D_{j,m}^{(\delta)} \cdot \prod_{\substack{s \in T_{\delta} \\ s \neq  j}} (1-\zeta^s t )^{\delta - m}   \right] \right\rbrace + \\
&\;+\\
& \;\;\vdots \\
&\;+ \\
\end{split}
\end{equation*}
\begin{equation}\label{eq3 HP_R/I}
\begin{split}
&+\sum_{j \in T_1} \left[ D_{j,1}^{(1)}(1-t)^k \cdot \prod_{r \in T_\delta} (1-\zeta^r t)^{\delta} \cdots \prod_{r \in T_2} (1-\zeta^r)^2 \prod_{\substack{s \in T_1 \\ s \neq j}}(1-\zeta^st)\right]
\end{split}
\end{equation}
By evaluating Equation (\ref{eq3 HP_R/I}) at $ t= 1$, we have 
\[
r(1) = C_k \cdot \prod_{j \in T_{\delta}} (1-\zeta^j)^{\delta} \cdots  \prod_{j \in T_1} (1-\zeta^j) 
\]
We already know that $\prod_{j \in T_{\delta}} (1-\zeta^j)^{\delta} \cdots  \prod_{j \in T_1} (1-\zeta^j) = \prod_{i=1}^k d_i$, then we have to show that $r(1)=0$. Actually, it suffices to prove that $f(1)=0$, because $f(1) = q(1)g(1) + r(1)$ and $f(1) = g(1) = 0$ imply $r(1) = 0$.
We prove that $f(1) = 0$ by induction on the cardinality $s$ of a minimal set of generators of $I$.
Suppose $s=1$, since $I \neq (0)$ there exists $m \in I$ such that $I = (m)$. Then $f(t) = < I >= 1- t^{\deg_W(m)}$, so obviously $f(1)=0$.
If $s \geq 2$, let $\{m_1, \dots, m_s\}$ a minimal set of generators of I, then $I=(m_1, \dots, m_s)$ and, by using proprieties of the h-vector, we have
\begin{multline*}
f(t) = \;< (m_1,\dots,m_s) > \;=\\
<(m_1,\dots,m_{s-1})> - t^{\deg_W(m_s)} < (m_1,\dots,m_{s-1}) : (m_s)>
\end{multline*}

and we can conclude by induction hypothesis. 
\end{proof}

We can now demonstrate that for all $i=0,\dots,d-1$ it holds $\deg(P_i) \leq k-2$.

\begin{proof}[Proof of Proposition \ref{degree P case R/I}]
We consider the following expression for Hilbert-Poincar\'e series, which we have already seen in Equation (\ref{eq2 HP_R/I})
\begin{equation*}
\begin{split}
\HP_{R/I}(t) &= \frac{f(t)}{g(t)} = q(t) + \frac{r(t)}{g(t)} = q(t) + \left\lbrace \frac{C_1}{1-t} + \frac{C_2}{(1-t)^2} + \dots + \frac{C_k}{(1-t)^k}+ \right. \\ & \left.+ \sum_{j \in T_\delta} \left[ \frac{D_{j,1}^{(\delta)}}{1-\zeta^jt} + \dots + \frac{D_{j,\delta}^{(\delta)}}{(1-\zeta^j t)^ \delta} \right] + \dots + \sum_{j \in T_1} \left[ \frac{D_{j,1}^{(1)}}{1-\zeta^j t} \right] \right\rbrace
\end{split}
\end{equation*}
Since $\delta< k$, and by lemma \ref{C_k in HP_R/I}, which asserts that $C_k = 0$, we have that the maximum power of $t$ in the denominator of $\HP_{R/I}$ is at most $k-1$. Therefore, by recalling the construction of Hilbert quasi-polynomial done in the proof of proposition \ref{def hilbert quasi-poly}, we write
\[
\HP_{R/I} (t) = \sum_{j=0}^{d-1} \frac{Q_j(t)}{(1-\zeta^j t)^{\alpha_j}} 
\]
for some polynomials $Q_j (t)  := \sum_{h=0}^{n_j} a_{jh}t^h \in \C[t]$ of degree $n_j$ with $\alpha_j \leq k-1$ for all $j=0, \dots, d-1$. Then, for all $n \geq max_{\substack{j}} \{n_j\}$, we have
\[
H_{R/I}(n) = \sum_{j=0}^{d-1} S_j(n)\zeta^{jn}
\qquad \text{where} \; \;
S_j (n):= \sum_{h=0}^{n_j} \binom{n + \alpha_j -h -i}{\alpha_j -1} \zeta^{-jh}
\]
Observe that $S_j$ is a polynomial in $n$ of degree $\alpha_j -1$.
Finally, we get 
$$
P_i (x) := S_0(x) + S_1(x) \zeta^i + \dots + S_{d-1}(x)\zeta^{(d-1)i}
$$
and, by our previous considerations, every $S_j$ has degree less or equal to $k-2$ and the statement follows.

\end{proof}

%
%
%
%
%

\subsection{Proprieties of the coefficients of the Hilbert quasi-polynomial}
\sectionmark{Proprieties of the coefficients}
In this subsection, we present some proprieties of the coefficients of Hilbert quasi-polynomials. In particular, we show that a Hilbert quasi-polynomial of degree $n$ splits into a polynomial of degree $n$ and a lower degree quasi-polynomial which have a periodic structure that we will describe.

Before showing our results, we present an example.

\begin{example}\label{ex1}
Consider $R=\Q[x_1,\dots,x_5]$ graded by $W=[1,2,3,4,6]$. We have $k=5$ and $d=12$ and the Hilbert quasi-polynomial $P_R^W=\{P_0,\dots, P_{11}\}$ is 

{\small
\setlength{\tabcolsep}{2.5pt}
\begin{tabular}{lllllllllll}
$P_0(x)$&$=$&$\textcolor{blue}{1/3456} x^4$&$+$&$\textcolor{BlueCarisma}{1/108}x^3$&$+$&$\textcolor{teal}{5/48}x^2$&$+$&$\textcolor{grad_0}{1/2}x$&$+$&$\textcolor{grad0}{1}$ \\
$P_1(x)$&$=$&$\textcolor{blue}{1/3456}x^4$&$+$&$\textcolor{BlueCarisma}{1/108}x^3$&$+$&$\textcolor{olive}{19/192}x^2$&$+$&$\textcolor{grad_1}{43/108}x$&$+$&$\textcolor{grad1}{1705/3456}$ \\
$P_2(x)$&$=$&$\textcolor{blue}{1/3456}x^4$&$+$&$\textcolor{BlueCarisma}{1/108}x^3$&$+$&$\textcolor{teal}{5/48}x^2$&$+$&$\textcolor{grad_2}{25/54}x$&$+$&$\textcolor{grad2}{125/216}$ \\
$P_3(x)$&$=$&$\textcolor{blue}{1/3456}x^4$&$+$&$\textcolor{BlueCarisma}{1/108}x^3$&$+$&$\textcolor{olive}{19/192}x^2$&$+$&$\textcolor{grad_3}{5/12}x$&$+$&$\textcolor{grad3}{75/128}$ \\
$P_4(x)$&$=$&$\textcolor{blue}{1/3456}x^4$&$+$&$\textcolor{BlueCarisma}{1/108}x^3$&$+$&$\textcolor{teal}{5/48}x^2$&$+$&$\textcolor{darkgray}{13/27}x$&$+$&$\textcolor{grad4}{20/27}$ \\
$P_5(x)$&$=$&$\textcolor{blue}{1/3456}x^4$&$+$&$\textcolor{BlueCarisma}{1/108}x^3$&$+$&$\textcolor{olive}{19/192}x^2$&$+$&$\textcolor{gray}{41/108}x$&$+$&$\textcolor{grad5}{1001/3456}$ \\
$P_6(x)$&$=$&$\textcolor{blue}{1/3456}x^4$&$+$&$\textcolor{BlueCarisma}{1/108}x^3$&$+$&$\textcolor{teal}{5/48}x^2$&$+$&$\textcolor{grad_0}{1/2}x$&$+$&$\textcolor{grad6}{7/8}$ \\
$P_7(x)$&$=$&$\textcolor{blue}{1/3456}x^4$&$+$&$\textcolor{BlueCarisma}{1/108}x^3$&$+$&$\textcolor{olive}{19/192}x^2$&$+$&$\textcolor{grad_1}{43/108}x$&$+$&$\textcolor{grad7}{1705/3456}$ \\
$P_8(x)$&$=$&$\textcolor{blue}{1/3456}x^4$&$+$&$\textcolor{BlueCarisma}{1/108}x^3$&$+$&$\textcolor{teal}{5/48}x^2$&$+$&$\textcolor{grad_2}{25/54}x$&$+$&$\textcolor{grad8}{19/27}$ \\
$P_9(x)$&$=$&$\textcolor{blue}{1/3456}x^4$&$+$&$\textcolor{BlueCarisma}{1/108}x^3$&$+$&$\textcolor{olive}{19/192}x^2$&$+$&$\textcolor{grad_3}{5/12}x$&$+$&$\textcolor{grad9}{75/128}$ \\
$P_{10}(x)$&$=$&$\textcolor{blue}{1/3456}x^4$&$+$&$\textcolor{BlueCarisma}{1/108}x^3$&$+$&$\textcolor{teal}{5/48}x^2$&$+$&$\textcolor{darkgray}{13/27}x$&$+$&$\textcolor{grad10}{133/216}$\\
$P_{11}(x)$&$=$&$\textcolor{blue}{1/3456}x^4$&$+$&$\textcolor{BlueCarisma}{1/108}x^3$&$+$&$\textcolor{olive}{19/192}x^2$&$+$&$\textcolor{gray}{41/108}x$&$+$&$\textcolor{grad11}{1001/3456}$ \\[1pt]
\newline
\end{tabular}
}


We have 12 polynomials of degree 4 with leading coefficient equal to 1/3456, that's just what we expected.
As can be seen by comparing polynomials, the following facts hold:
\begin{itemize}
\item The coefficient of the term of degree 3 is the same for all polynomials.
\item The coefficient of the term of degree 2 has periodicity 2, i.e. 5/48 is the coefficient of the term of degree 2 of all $P_i(x)$ such that $i \equiv 0 \mod 2$ and 19/192 is the coefficient of the term of degree 2 of all $P_i(x)$ such that $i \equiv 1 \mod 2$.
\item The coefficient of the term of degree 1 has periodicity 6, i.e. 1/12 is the coefficient of the term of degree 1 of all $P_i(x)$ such that $i \equiv 0 \mod 6$, 43/108 is the coefficient of the term of degree 1 of all $P_i(x)$ such that $i \equiv 1 \mod 6$, 5/48 is the coefficient of the term of degree 1 of all $P_i(x)$ such that $i \equiv 2 \mod 6$ and so on.
\item The constant term seems not to have any kind of regularity.
\end{itemize}
\end{example}

The goal of next two results is to see if we can predict which and how coefficients change. We use $\delta$ to denote  $max\{ |I| \mid gcd(d_i)_{i \in I} \neq 1 \; \text{and} \; I \subseteq \{1,\dots, k\} \}$.

\begin{proposition}\label{prop link}
Let $(R,W)$ and $P_R^W$ be as above. Then 
\[
P_R^W(x) = Q(x) + R(x)
\]
where $Q(x) \in \Q[x]$ is a polynomial of degree $k-1$, whereas $R(x)$ is a quasi-polynomial with rational coefficients of degree $\delta -1$.
\end{proposition}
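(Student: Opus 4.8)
The plan is to read the decomposition straight off the partial fraction expansion of $\HP_R(t)$ in Equation (\ref{eq2 HP}), separating the contribution of the root $t=1$ (which produces an honest polynomial) from those of the nontrivial roots $\zeta^{-j}$ (which produce the periodic part). Recall from the proof of Proposition \ref{def hilbert quasi-poly} that
\[
H_R(n) = \sum_{j=0}^{d-1} S_j(n)\,\zeta^{jn}, \qquad P_i(x) = \sum_{j=0}^{d-1} S_j(x)\,\zeta^{ij},
\]
where, by Remark \ref{remark1}, $S_j$ is a polynomial in $x$ of degree $\alpha_j-1$, with $\alpha_j$ the multiplicity of the factor $(1-\zeta^j t)$ in $g(t)$; in particular $S_j \equiv 0$ whenever $\zeta^{-j}$ is not a root of $g(t)$.

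First I would set $Q(x) := S_0(x)$, the summand attached to $j=0$. Since $\zeta^{0\cdot i}=1$, this term is independent of $i$, so it is a genuine polynomial common to every $P_i$. By Equation (\ref{g(t)}) the factor $(1-t)$ occurs in $g(t)$ with multiplicity $\alpha_0=k$, so $\deg Q = k-1$, and its leading coefficient is the one computed in the proposition on the degree and leading coefficient of $P_R^W$, namely $\frac{1}{(k-1)!\prod_{i=1}^k d_i}\neq 0$, confirming $\deg Q = k-1$ exactly. Moreover $Q=S_0$ has rational coefficients, since $S_0(x)=\sum_h a_{0h}\binom{x+k-h-1}{k-1}$ with the $a_{0h}$ the rational coefficients of $Q_0(t)$.

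Next I would define the periodic part $R(x):=P_R^W(x)-Q(x)$, that is $R_i(x)=\sum_{j=1}^{d-1} S_j(x)\,\zeta^{ij}$. By Equation (\ref{g(t)}) a nontrivial root $\zeta^{-j}$ occurs in $g(t)$ precisely when $j\in T_r$ for some $r$, in which case its multiplicity is $r$, while all other $S_j$ vanish. Combining Remark \ref{remark1} (which gives $\deg S_j=r-1$ for $j\in T_r$) with Lemma \ref{lemma2} (which gives $T_r=\emptyset$ for $r>\delta$), we obtain $\deg S_j \le \delta-1$ for every $j\neq 0$, whence $\deg R_i \le \delta-1$ for all $i$. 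The coefficients of $R$ are rational because each $R_i=P_i-Q$ is a difference of polynomials with rational coefficients.

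It remains to check that the degree of $R$ is exactly $\delta-1$, which I expect to be the crux. The coefficient of $x^{\delta-1}$ in $R_i$ equals $\frac{1}{(\delta-1)!}\sum_{j\in T_\delta} B_{j,\delta}^{(\delta)}\zeta^{ij}$, where $B_{j,\delta}^{(\delta)}$ is the top partial-fraction coefficient in (\ref{eq2 HP}). By the definition of $\delta$ together with Lemma \ref{lemma1}, the set $T_\delta$ is nonempty, and for each $j\in T_\delta$ the factor $(1-\zeta^j t)$ occurs in $g(t)$ with multiplicity exactly $\delta$; writing $g(t)=(1-\zeta^j t)^\delta u(t)$ with $u(\zeta^{-j})\neq 0$, and since the numerator of $\HP_R$ is $1$, we get $B_{j,\delta}^{(\delta)}=\lim_{t\to\zeta^{-j}}(1-\zeta^j t)^\delta\HP_R(t)=1/u(\zeta^{-j})\neq 0$. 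As the characters $i\mapsto\zeta^{ij}$, for $j\in T_\delta$, are linearly independent, the coefficient function $i\mapsto\sum_{j\in T_\delta}B_{j,\delta}^{(\delta)}\zeta^{ij}$ is not identically zero, so some $R_i$ attains degree $\delta-1$. The main obstacle is precisely this non-cancellation step: one must rule out that the leading contributions of the several maximal-multiplicity roots cancel for all residues $i$ at once, which is exactly what the nonvanishing of the $B_{j,\delta}^{(\delta)}$ together with character independence secures.
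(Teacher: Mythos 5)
Your proof is correct and follows essentially the same route as the paper: read the decomposition off the partial fraction expansion, take $Q=S_0$ from the $(1-t)^k$ term, and bound the rest using Remark \ref{remark1} and Lemma \ref{lemma2}. The only difference is that you also justify that $\deg R$ is \emph{exactly} $\delta-1$ (via the nonvanishing of the top coefficients $B_{j,\delta}^{(\delta)}$ and linear independence of the characters $i\mapsto\zeta^{ij}$), a point the paper's proof leaves implicit; that addition is sound and strengthens the argument.
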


\begin{proof}
Recall that due to Lemma \ref{lemma2} we can write the Hilbert-Poincar\'e series of $(R,W)$ as
\begin{equation}
\HP_{R} (t) =  \frac{Q_0(t)}{(1-t)^k} + \sum_{j \in T_{\delta}} \frac{Q_j^{(\delta)}(t)}{(1-\zeta^j t)^{\delta}} + \dots + \sum_{j \in T_1} \frac{Q_j^{(1)}(t)}{(1-\zeta^j t)}
\end{equation}
Thanks to remark \ref{remark1}, we have that the coefficients of the terms of degrees $\delta,\dots,k-1$ don't change and the result follows.
\end{proof}

Let us return to example \ref{ex2}. In that case, $\delta$ is equal to 3. By proposition \ref{prop link}, the coefficients of the terms of degree $\delta, \dots, k-1$, that is the 3th and 4th coefficient, don't change and this is exactly what happens in the example.

\begin{proposition}
Let $(R,W)$ and $P_R^W$ be as above. The $r$th coefficient of $P_R^W$, for $r=0,\dots,k-1$, has periodicity equal to \[
\delta_r := lcm \; (\; gcd (d_i)_{i \in I} \mid |I| = r+1, I \subseteq \{1,\dots,k\} )
\]
\end{proposition}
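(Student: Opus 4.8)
The plan is to read off the degree-$r$ coefficient of $P_R^W$ directly from the partial-fraction expansion (\ref{eq2 HP}) and to track its dependence on the residue class $i \bmod d$. Writing $P_i(x)=\sum_{j=0}^{d-1} S_j(x)\,\zeta^{ij}$ as in the proof of Proposition \ref{def hilbert quasi-poly}, where $S_j$ is the polynomial attached to the pole $\zeta^j$ and, by Remark \ref{remark1}, $\deg S_j=\mu_j-1$ with $\mu_j$ the order of that pole, the coefficient of $x^r$ in $P_i$ is $\gamma_r(i)=\sum_{j=0}^{d-1} c_{r,j}\,\zeta^{ij}$ with $c_{r,j}:=[x^r]S_j$. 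The crucial elementary observation is that $c_{r,j}=0$ whenever $\mu_j\le r$, so only the poles of order at least $r+1$ contribute to $\gamma_r$.

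First I would reformulate both $\mu_j$ and $\delta_r$ in terms of the multiplicative order $o:=d/\gcd(d,j)$ of $\zeta^j$. Using Lemma \ref{lemma1} (equivalently, the factorisation (\ref{g(t)}) of $g(t)$), the order of the pole at $\zeta^j$ equals $\nu(o):=\#\{\,i : o\mid d_i\,\}$, a quantity depending only on $o$. I would then prove the identity $\delta_r=\mathrm{lcm}\{\,o\mid d : \nu(o)\ge r+1\,\}$: every $\gcd(d_i)_{i\in I}$ with $|I|=r+1$ divides $d$ and has $\nu\ge r+1$, while conversely any such $o$ divides the gcd of any $r+1$ of the indices $i$ with $o\mid d_i$; the two inclusions give equality.

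With this in hand the upper bound is immediate: if $c_{r,j}\ne 0$ then $\mu_j=\nu(o)\ge r+1$, so $o\mid\delta_r$ and hence $\zeta^{j\delta_r}=1$; therefore $\gamma_r(i+\delta_r)=\gamma_r(i)$ and $\delta_r$ is a period of the $r$-th coefficient. For minimality I would invoke the linear independence of the characters $i\mapsto\zeta^{ij}$ on $\Z/d\Z$: this forces the least period of $\gamma_r$ to equal $\mathrm{lcm}\{\,\mathrm{ord}(\zeta^j): c_{r,j}\ne 0\,\}$. Since $\HP_R\in\Q(t)$, the coefficients $c_{r,j}$ attached to the primitive $o$-th roots of unity form a single Galois orbit and so vanish simultaneously; thus it suffices to exhibit, for each prime power dividing $\delta_r$, an order $o$ realising it for which $c_{r,j}\ne 0$. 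When $\nu(o)=r+1$ this is automatic, since then $c_{r,j}$ is the (nonzero) leading coefficient of $S_j$.

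The main obstacle is exactly the case $\nu(o)>r+1$, where $c_{r,j}$ is a subleading coefficient of $S_j$ and could a priori be killed by cancellation, while no order with $\nu(o)=r+1$ need be available (e.g. $W=[2,2,2,1]$ for $r=1$). To settle it I would pass to the local Laurent expansion at the pole: setting $w=1-\zeta^{-j}t$ one has $\HP_R=H(w)/w^{\nu(o)}$ with $H$ holomorphic and $H(0)\ne 0$, and $c_{r,j}=\sum_{m=r+1}^{\nu(o)}\kappa_{m,r}\,[w^{\nu(o)-m}]H$ for the explicit positive rationals $\kappa_{m,r}=[x^r]\binom{x+m-1}{m-1}$. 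The task is to show this combination never vanishes. I expect this to follow from a sign/growth analysis of the Taylor coefficients of $H$; for instance at $\zeta^j=-1$ one computes $[w^1]H/H(0)=\tfrac12\big(\sum_i d_i-\nu(o)\big)>0$, which already rules out cancellation in the first nontrivial case, and I would aim to extend such an estimate to general primitive roots of unity. This non-vanishing is the genuinely delicate point; the remainder is bookkeeping with Lemma \ref{lemma1} and character orthogonality.
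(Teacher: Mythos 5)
Your proof of the periodicity claim is correct and complete, and at bottom it is the paper's argument: both rest on the two facts that only the $S_j$ attached to poles of order at least $r+1$ contribute to the coefficient of $x^r$ (Remark \ref{remark1}), and that $\zeta^{j\delta_r}=1$ for every such $j$. You get the second fact directly from the reformulation $\delta_r=\mathrm{lcm}\{\,o\mid d:\nu(o)\ge r+1\,\}$, which you verify correctly; this is cleaner than the paper's route, which proves the same divisibility via Lemma \ref{lemma1} and an induction on the number of non-empty sets $T_s$ --- an induction that your observation shows to be dispensable, since every contributing $j$ (not just those in $T_{r+1}$) already satisfies $\mathrm{ord}(\zeta^j)\mid\delta_r$. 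The one caution concerns scope: as the paper formalizes the statement immediately after it (``$a_{jr}=a_{ir}$ when $j=i+\delta_r \bmod d$''), only the assertion that $\delta_r$ \emph{is} a period is claimed, and that is all the paper proves. Your additional program for minimality of the period --- character independence reducing it to $\mathrm{lcm}\{\mathrm{ord}(\zeta^j):c_{r,j}\ne 0\}$, the Galois-orbit reduction, and the non-vanishing of the subleading coefficients $c_{r,j}$ when $\nu(o)>r+1$ --- is not needed for the proposition, and its key step (the non-vanishing) is, as you acknowledge, left open; so if you intend to assert the least period is exactly $\delta_r$, that remains a genuine gap, while if you assert only periodicity your proof stands as is.
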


Formally, the  proposition asserts that if we denote the $r$th coefficient of $P_i(x)$ by $a_{ir}$, then $a_{jr} = a_{ir}$ when $j = i+\delta_r \mod d$.

\begin{proof}
Let $\delta = max\{ |I| \mid gcd(d_i)_{i \in I} \neq 1 \; \text{and} \; I \subseteq \{1,\dots, k\} \}$ be as usual. Then $\delta_r = 1$ for $\delta \leq r \leq k-1$ and, by proposition \ref{prop link}, the assertion follows.
Let $ r \leq \delta-1$. Recall that $P_i(x) = S_0(x) + S_1(x) \zeta^i + \dots + S_{d-1}(x)\zeta^{(d-1)i} $, where $S_j (x) = \sum_{h=0}^{n_j} a_{jh} \binom{x+\alpha_j-h-1}{\alpha_j-1} \zeta^{-jh}$. Note that each $S_j(x)$ doesn't depend on $i$ and its degree is equal to $\alpha_j-1$. Therefore the $r$th coefficient of $P_i(x)$ is the $r$th coefficient of
\[
S_0(x) + S_{j_1}(x) \zeta^{i \cdot j_1} + \dots + S_{j_v}(x) \zeta^{i \cdot j_v}
\]
with
\begin{equation} \label{j1 ... jv}
\{j_1,\dots,j_v\} = \bigcup_{s \geq r+1} T_s = \bigcup_{r+1 \leq s \leq \delta} T_s
\end{equation}
where the second equality holds because of $T_{\delta+1} = \dots = T_k = \emptyset$.
We recall that 
\begin{equation*}
T_{s} = \bigcup _{\substack{I \subseteq \{1,\dots,k\} , \mid I \mid = s\\ gcd(d_i)_{i \in I} \neq 1 }}\left\lbrace \frac{d}{gcd(d_i)_{i \in I}}, \dots, (gcd(d_i)_{i \in I} -1) \cdot \frac{d}{gcd(d_i)_{i \in I}} \right\rbrace
\end{equation*}
To prove that the $r$th coefficient of $S_0(x) + S_{j_1}(x) \zeta^{i \cdot j_1} + \dots + S_{j_v}(x) \zeta^{i \cdot j_v}$ has period $\delta_r$ we use the induction on number $N$ of non-empty sets $T_s$ in Equation (\ref{j1 ... jv}).
Let $N=1$, then $r= \delta -1$. We observe that $\zeta^{\delta_r \cdot j_t} = 1$ for all $t=1,\dots, v$, (in fact, $j_t$ can be written as $a \cdot \frac{d}{gcd(d_i)_{i \in I}}$ for some integer $ 1 \leq a \leq (gcd(d_i)_{i \in I}-1)$, with $|I| = r+1$; since $gcd(d_i)_{i \in I}$ divides $\delta_r$, then $d$ divides $\delta_r \cdot j_t $). So we have
\begin{multline*}
S_0(x) + S_{j_1}(x) \zeta^{i \cdot j_1} + \dots +S_{j_u}(x)\zeta^{i \cdot j_v} =\\
 S_0(x) + S_{j_1}(x) \zeta^{(i+\delta_r) \cdot j_1} + \dots + S_{j_v}(x)\zeta^{(i+\delta_r) \cdot j_v}
\end{multline*}
and we are done.
Let $N \geq 2$, then $r \leq \delta -2$, and let us suppose that the thesis is true for $\bigcup_{r' \leq s \leq \delta} T_s$, with $r'  > \delta -N$, which means that the $r'$th coefficient has period equal to $\delta_{r'}$. 
We can order the set $\{j_1,\dots,j_u, j_{u+1},\dots,j_v\} $ in such a way that
\begin{itemize}
\item $j_1, \dots, j_u \in T_s$, with $s > r+1$, and $j_i \notin T_{r+1}$, for all $i=1,\dots, u$;
\item $j_{u+1}, \dots, j_v \in T_{r+1}$.
\end{itemize}
Let $\gamma:= lcm(gcd(d_i)_{i \in I} \mid |I| >r+1)$, then by induction hypothesis we have
\begin{multline*}
S_0(x) + S_{j_1}(x) \zeta^{i \cdot j_1} + \dots +S_{j_u}(x)\zeta^{i \cdot j_u} =\\
 S_0(x) + S_{j_1}(x) \zeta^{(i+\gamma) \cdot j_1} + \dots + S_{j_u}(x)\zeta^{(i+\gamma) \cdot j_u}
\end{multline*}
Since $\gamma$ divides $\delta_r$, it follows 
\begin{multline*}
S_0(x) + S_{j_1}(x) \zeta^{i \cdot j_1} + \dots +S_{j_u}(x)\zeta^{i \cdot j_u} =\\
 S_0(x) + S_{j_1}(x) \zeta^{(i+\delta_r) \cdot j_1} + \dots + S_{j_u}(x)\zeta^{(i+\delta_r) \cdot j_u}
\end{multline*}
In addition, since $\zeta^{ \delta_r \cdot j_t} = 1$ for all $t= u+1, \dots, v$, we have
\begin{multline*}
S_{j_{u+1}}(x) \zeta^{i \cdot j_{u+1}} + \dots +S_{j_v}(x)\zeta^{i \cdot j_v} =\\
S_{j_{u+1}}(x) \zeta^{(i+\delta_r) \cdot j_{u+1}} + \dots + S_{j_v}(x)\zeta^{(i+\delta_r) \cdot j_v}
\end{multline*}
and then, by putting together the two equations above, we are done.\\
\end{proof}

\begin{remark}
In some particular cases, for example when $k >> \delta$ or if the periods $\delta_r$ are quite small, knowing the period of the coefficients could be exploited to reduce the cost of computation for the Hilbert quasi-polynomials. 
\end{remark}
\

Return to example \ref{ex2}. We already know that $\delta=3$, then $\delta_r=1$ for $r=3,4$. Consider the other cases: for $r=2$, $\delta_r$ is exactly equal to 2 and for $r=1$, $\delta_r$ is exactly equal to 6.

\subsection{Formulas for the other coefficients of the Hilbert quasi-polynomial}
\sectionmark{Formulas for the other coefficients}
In this subsection, we give formulas to recover the $(k-2)$th and $(k-3)$th coefficient of the Hilbert quasi-polynomial from the weights $d_1, \dots, d_k$, and we illustrate how recover the others.

\medskip

We have already shown that if $\delta \leq k-2$ ($\delta \leq k-3$ respectively) then the $(k-2)$th ($(k-3)$th respectively) coefficient is the same for all $P_i$, $i=0,\dots,d-1$. For this reason, we suppose that $\delta$ is small enough to guarantee that the considered coefficient is equal for all $P_i$.

\begin{proposition} \label{k-2 coeff}
Let $(R,W)$ and $P_R^W$ be as above. If $\delta \leq k-2$, we denote the $(k-2)$th coefficient of $P_i$, for all $i=0, \dots,d-1$, by $c_{k-2}$. Then, it holds
\[
c_{k-2} = \frac{\sum_{i=1}^k d_i}{2 \cdot (k-2)! \cdot \prod_{i=1}^k d_i}
\]
\end{proposition}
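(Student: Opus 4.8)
The plan is to isolate the single term of $\HP_R(t)$ that governs the degree-$(k-2)$ behaviour of the quasi-polynomial and then read off its contribution. Since $\delta\le k-2$, Remark \ref{remark1} shows that each summand $\frac{Q_j^{(r)}(t)}{(1-\zeta^j t)^r}$ with $j\in T_r$ produces a polynomial in $n$ of degree $r-1\le\delta-1\le k-3$, so none of these affects the coefficient of $n^{k-2}$. Hence $c_{k-2}$ is exactly the coefficient of $n^{k-2}$ in
\[
S_0(n)=\sum_{h=0}^{k-1} a_h\binom{n-h+k-1}{k-1},
\]
where the $a_h:=a_{0,h}$ are the coefficients of $Q_0(t)=\sum_{h=0}^{k-1}a_h t^h$.

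First I would expand the binomial as a polynomial in $n$. Writing $\binom{n-h+k-1}{k-1}=\frac{1}{(k-1)!}\prod_{i=1}^{k-1}(n-h+i)$ and extracting the two leading coefficients, the coefficient of $n^{k-2}$ in $\binom{n-h+k-1}{k-1}$ equals $\frac{1}{(k-2)!}\bigl(\tfrac{k}{2}-h\bigr)$. Summing over $h$ with weights $a_h$, and using $\sum_h a_h=Q_0(1)$ and $\sum_h h\,a_h=Q_0'(1)$, gives
\[
c_{k-2}=\frac{1}{(k-2)!}\Bigl[\tfrac{k}{2}\,Q_0(1)-Q_0'(1)\Bigr].
\]
The value $Q_0(1)=A_k=\frac{1}{\prod_{i=1}^k d_i}$ is already supplied by Lemma \ref{formula_Ak} together with Equation (\ref{Q_0(t)}), so everything reduces to evaluating $Q_0'(1)$.

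The crux is therefore to pin down $Q_0'(1)$. Set $G(t):=\prod_{j=1}^k(1+t+\dots+t^{d_j-1})$, which by Equation (\ref{g(t)/(1-t)^k}) is precisely $g(t)/(1-t)^k$, so that $\HP_R(t)=\frac{1}{(1-t)^k G(t)}$. Multiplying the partial-fraction form (\ref{eq HP}) by $(1-t)^k$ yields $\frac{1}{G(t)}=Q_0(t)+(1-t)^k\,\Phi(t)$, where $\Phi(t)$ collects the remaining summands and is regular at $t=1$ because $G(1)=\prod_i d_i\neq0$ and each factor $(1-\zeta^j t)$ with $j\neq0$ is nonzero there. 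Consequently $Q_0(t)$ agrees with the Taylor expansion of $1/G(t)$ at $t=1$ up to order $k-1$, so in particular $Q_0'(1)=\bigl(1/G\bigr)'(1)=-G'(1)/G(1)^2$. Computing the logarithmic derivative factor by factor gives $\frac{G'(1)}{G(1)}=\sum_{j=1}^k\frac{1+2+\dots+(d_j-1)}{d_j}=\frac12\sum_{j=1}^k(d_j-1)=\frac12\bigl(\sum_{i=1}^k d_i-k\bigr)$, hence $Q_0'(1)=-\frac{1}{\prod_i d_i}\cdot\frac12\bigl(\sum_i d_i-k\bigr)$.

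Substituting $Q_0(1)$ and $Q_0'(1)$ into the expression for $c_{k-2}$ and simplifying collapses the $\pm\frac{k}{2}$ terms, leaving $c_{k-2}=\frac{\sum_{i=1}^k d_i}{2\,(k-2)!\,\prod_{i=1}^k d_i}$, as claimed. I expect the only delicate point to be the justification that $Q_0'(1)$ coincides with $(1/G)'(1)$, that is, that the partial-fraction remainder $\Phi(t)$ contributes nothing to the first two Taylor coefficients of $Q_0$ at $t=1$; once the regularity of $\Phi$ at $t=1$ is noted this is immediate, and the remainder is the routine binomial and logarithmic-derivative bookkeeping sketched above.
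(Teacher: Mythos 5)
Your proposal is correct and follows essentially the same route as the paper: isolate $S_0(n)$ via Remark \ref{remark1}, extract the $n^{k-2}$ coefficient of the binomials to get $c_{k-2}=\frac{1}{(k-2)!}\bigl[\tfrac{k}{2}Q_0(1)-Q_0'(1)\bigr]$, and evaluate the logarithmic derivative of $\prod_j(1+t+\dots+t^{d_j-1})$ at $t=1$. The only cosmetic difference is that the paper packages $-Q_0'(1)$ as the constant $A_{k-1}$ and proves a separate lemma for it by differentiating Equation (\ref{1 = ...}), whereas you obtain the same value directly by observing that $Q_0$ is the order-$(k-1)$ Taylor jet of $1/G$ at $t=1$ — a clean and valid shortcut.
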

\

\begin{proof}
Let $P_i (x) = S_0(x) + S_1(x)\zeta^i + \dots + S_{d-1}(x)\zeta^{i \cdot (d-1)}$ be as usual, where
\[
S_j(n) = \sum_{h=0}^{n_j} a_{jh} \binom{n+\alpha_j -h-1}{\alpha_j -1} \zeta^{-jh}.
\]
Since $\delta \leq k-2$ and by remark \ref{remark1}, the $(k-2)$th coefficient of $P_i$ is given by the $(k-2)$th coefficient of $S_0(n)$. Let $a_i := a_{0i}$, then we have
\[
S_0(n) = a_0 \binom{n+k-1}{k-1} + a_1\binom{n+k-2}{k-1} + \dots + a_{k-1} \binom{n}{k-1} + a_k \binom{n-1}{k-1} =
\]
\[
 = a_0 \cdot \frac{(n+k-1)\cdots (n+1)}{(k-1)!} + a_1 \cdot \frac{(n+k-2) \cdots (n)}{(k-1)!}  +
\]
\begin{equation}\label{eq S_0}
+ \dots +a_{k-1} \cdot \frac{(n)\cdots (n-k+2)}{(k-1)!} + a_k \cdot \frac{(n-1)\cdots (n-k+1)}{(k-1)!} =  \sum_{i=0}^{k-1} c_{i}n^{i}
\end{equation}

By this equation, we get

\[
c_{k-2} = a_0 \left[ \frac{(k-1)+(k-2)+ \dots +1}{(k-1)!} \right]
+ a_1 \left[ \frac{(k-2)+(k-3)+ \dots +1}{(k-1)!} \right] +
\]
\[
+ \dots + a_k \left[ \frac{(-1)+(-2)+ \dots + (-k+1)}{(k-1)!}  \right] =
\]
\[
= \frac{a_0}{(k-1)!} \cdot \frac{k(k-1)}{2} + \frac{a_1}{(k-1)!} \cdot \frac{(k-1)(k-2)}{2} + \cdots + \frac{a_k}{(k-1)!} \cdot \frac{(-k)(k-1)}{2}= 
\]
\[
= \frac{1}{2(k-2)!} \left[ k a_0 +  (k-2) a_1 + \dots + (k - 2k)a_k \right] =
\]
\begin{equation}\label{c_k-2}
= \frac{1}{2(k-2)!} \left[ k \sum_{i=0}^k a_i - 2\sum_{i=0}^k (i \cdot a_i) \right]
\end{equation}

We observe that 
\begin{equation}\label{Q_0 e Q'_0}
\sum_{i=0}^k a_i = Q_0(1) \quad \text{and} \quad \sum_{i=0}^k i \cdot a_i = Q_0'(1)
\end{equation}
Since $Q_0(t) = A_1(1-t)^{k-1} + A_2(1-t)^{k-2}+ \dots +A_{k-1}(1-t) + A_k $, by evaluating $Q_0(t)$ and $Q_0' (t)$ at $t=1$ and by putting together Equation (\ref{c_k-2}) and Equation (\ref{Q_0 e Q'_0}), we have

\[
c_{k-2} = \frac{1}{2(k-2)!} \left[ k \cdot Q_0(1) - 2 \cdot Q_0'(1) \right] = \frac{1}{2(k-2)!} \left[ k \cdot A_k + 2 \cdot A_{k-1} \right].
\]
By the following lemma and by recalling that $A_k = \frac{1}{\prod_{i=1}^k d_i}$, the proposition is proved.

\end{proof}

\begin{lemma}
The constant $A_{k-1}$ which appears in Equation (\ref{eq2 HP}) of the expression of $Q_0(t)$ is such that
\[
A_{k-1} = \frac{\sum_{i=1}^k d_i -k}{2 \cdot \prod_{i=1}^k d_i}
\]
\end{lemma}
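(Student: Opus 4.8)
The plan is to extract $A_{k-1}$ directly from the polynomial identity (\ref{1 = ...}), in the same spirit as the proof of Lemma \ref{formula_Ak}, but differentiating once before evaluating at $t=1$. First I would abbreviate $F(t) := \prod_{j=1}^k (1+t+\dots+t^{d_j-1})$, which by Equation (\ref{g(t)/(1-t)^k}) coincides with the product $\prod_{j \in T_\delta}(1-\zeta^j t)^\delta \cdots \prod_{j \in T_1}(1-\zeta^j t)$ that multiplies the $A$-terms in (\ref{1 = ...}). Since every summand arising from a root of unity $\zeta^j$ with $j\neq 0$ carries an explicit factor $(1-t)^k$, the identity (\ref{1 = ...}) can be organised as
\[
1 = F(t)\sum_{m=1}^k A_m (1-t)^{k-m} + (1-t)^k\, G(t),
\]
where $G(t)$ collects all the remaining terms and is regular at $t=1$.

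Next I would differentiate this identity in $t$ and evaluate at $t=1$, assuming $k\geq 2$ (otherwise the coefficient $A_{k-1}$ does not occur). In the derivative of the $A$-part, a summand survives at $t=1$ only when the accompanying power of $(1-t)$ is annihilated: the term $F'(t)\sum_m A_m (1-t)^{k-m}$ contributes only through $m=k$, giving $F'(1)A_k$, while the term $F(t)\sum_m A_m (k-m)(-1)(1-t)^{k-m-1}$ contributes only through $m=k-1$, giving $-F(1)A_{k-1}$. The derivative of $(1-t)^k G(t)$ vanishes at $t=1$ because $k\geq 2$. This yields the linear relation $0 = F'(1)A_k - F(1)A_{k-1}$, hence $A_{k-1} = F'(1)A_k/F(1)$.

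It then remains to compute $F(1)$ and $F'(1)$. One has $F(1) = \prod_{i=1}^k d_i$, and by the logarithmic derivative $F'/F = \sum_j f_j'/f_j$ with $f_j(t)=1+t+\dots+t^{d_j-1}$, where $f_j(1)=d_j$ and $f_j'(1) = 1+2+\dots+(d_j-1) = \tfrac{d_j(d_j-1)}{2}$, so that $F'(1)/F(1) = \sum_{j=1}^k \tfrac{d_j-1}{2} = \tfrac{1}{2}\bigl(\sum_j d_j - k\bigr)$. Substituting $A_k = 1/\prod_i d_i$ from Lemma \ref{formula_Ak} gives
\[
A_{k-1} = \frac{F'(1)}{F(1)}\,A_k = \frac{\sum_{i=1}^k d_i - k}{2}\cdot\frac{1}{\prod_{i=1}^k d_i},
\]
which is the claimed value.

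The main obstacle I anticipate is the bookkeeping in the differentiation step: one must verify carefully that, after a single derivative and evaluation at $t=1$, exactly the two indices $m=k$ and $m=k-1$ from the $A$-part survive and that every root-of-unity contribution drops out. This hinges on the fact that all those contributions share the factor $(1-t)^k$ with $k\geq 2$, so both they and their first derivatives vanish at $t=1$; the rest is a routine Taylor/logarithmic-derivative computation.
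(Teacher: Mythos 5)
Your proposal is correct and follows essentially the same route as the paper: differentiate the identity (\ref{1 = ...}) once, evaluate at $t=1$ so that only the $m=k$ and $m=k-1$ terms of the first summation survive, and compute $F'(1)$ via the factorization $F(t)=\prod_j(1+t+\dots+t^{d_j-1})$. Your explicit isolation of the $(1-t)^k\,G(t)$ remainder is a slightly tidier way of justifying the paper's claim that all root-of-unity contributions vanish, but the argument is the same.
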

\

\begin{proof}
In order to obtain an expression for $A_{k-1}$, evaluating at $t=1$ the derivative of the Equation (\ref{1 = ...}) is a good strategy. Before doing it, we observe that the evaluation at $t=1$ of the derivative of the equation gives 0 except for the terms in the first summation for $t=k-1$ and $t=k$. For this reason, we'll consider only these addends and we'll calculate only their derivatives.
\begin{equation*}
0 = \frac{\partial}{\partial t} \left[ (A_{k-1} (1-t) + A_k) \cdot \prod_{j \in T_\delta} (1-\zeta^jt)^{\delta} \cdots \prod_{j \in T_1} (1-\zeta^jt) \right]_{\left. \middle |_{\text{\phantom{\begin{LARGE}
L
\end{LARGE}}\!\!\!\!\!\!\!\!\!\!\!\!\!\!\!\!\begin{small}\emph{t}=1\end{small}}} \right.}
\end{equation*}
\begin{equation*}
\begin{split}
0 &= -A_{k-1} \cdot \prod_{j \in T_\delta} (1-\zeta^j)^{\delta} \cdots \prod_{j \in T_1} (1-\zeta^j)+ \\
&+ A_k \cdot \frac{\partial}{\partial t} \left[ \prod_{j \in T_\delta} (1-\zeta^jt)^{\delta} \cdots \prod_{j \in T_1} (1-\zeta^jt) \right]_{\left. \middle |_{\text{\phantom{\begin{LARGE}
L
\end{LARGE}}\!\!\!\!\!\!\!\!\!\!\!\!\!\!\!\!\begin{small}\emph{t}=1\end{small}}} \right.}\\
\end{split}
\end{equation*}
\newline
We already know that
\[
A_k = \frac{1}{\prod_{i=1}^k d_i} \qquad \text{and} \qquad \prod_{j \in T_\delta} (1-\zeta^j)^{\delta} \cdots \prod_{j \in T_1} (1-\zeta^j) = \prod_{i=1}^k d_i.
\]
So, we have 
\begin{equation}\label{eq A_k-1}
A_{k-1} = \frac{1}{(\prod_{i=1}^k d_i)^2} \cdot \frac{\partial}{\partial t} \left[ \prod_{j \in T_\delta} (1-\zeta^jt)^{\delta} \cdots \prod_{j \in T_1} (1-\zeta^jt) \right]_{\left. \middle |_{\text{\phantom{\begin{LARGE}
L
\end{LARGE}}\!\!\!\!\!\!\!\!\!\!\!\!\!\!\!\!\begin{small}\emph{t}=1\end{small}}} \right.}
\end{equation}
and it remains to calculate $ \frac{\partial}{\partial t} \left[ \prod_{j \in T_\delta} (1-\zeta^jt)^{\delta} \cdots \prod_{j \in T_1} (1-\zeta^jt) \right]$ and evaluate it at $t=1$.

Consider the following equations
\[
\frac{g(t)}{(1-t)^k}= \left[ \prod_{j \in T_\delta} (1-\zeta^jt)^{\delta}\right] \cdots \left[ \prod_{j \in T_1} (1-\zeta^jt) \right]
\]
\[
\frac{g(t)}{(1-t)^k}= \prod_{j=1}^k \frac{1-t^{d_j}}{1-t} = \prod_{j=1}^k \left(1+t+\dots+t^{d_j -1}\right)
\]
Then, it holds
\begin{equation*}
\begin{split}
&\frac{\partial}{\partial t} \left[ \prod_{j \in T_\delta} (1-\zeta^jt)^{\delta} \cdots \prod_{j \in T_1} (1-\zeta^jt) \right]  = \frac{\partial}{\partial t} \left[ \prod_{j=1}^k \left(1+t+\dots+t^{d_j -1}\right) \right] = \\ &=\sum_{i=1}^k \left[ \prod_{j\neq i} \left(1+t+\dots+t^{d_j -1}\right) \right] \left(1+2t+\dots+(d_i-1) t^{d_i -2}\right)
\end{split}
\end{equation*}
Finally, we have
\begin{equation*}
\begin{split}
&\frac{\partial}{\partial t} \left[ \prod_{j \in T_\delta} (1-\zeta^jt)^{\delta} \cdots \prod_{j \in T_1} (1-\zeta^jt) \right]_{\left. \middle |_{\text{\phantom{\begin{LARGE}
L
\end{LARGE}}\!\!\!\!\!\!\!\!\!\!\!\!\!\!\!\!\begin{small}\emph{t}=1\end{small}}} \right.} = \sum_{i=1}^k \left[ \left( \prod_{j \neq i} d_j \right) \frac{d_i(d_i -1)}{2} \right] = \\
&= \prod_{i=1}^k d_i \cdot \left( \sum_{i=1}^k \frac{d_i -1}{2} \right) = \frac{1}{2} \cdot \prod_{i=1}^k d_i \cdot \left[ \sum_{i=1}^k d_i -k \right]
\end{split}
\end{equation*} 
and by substituting this expression in Equation (\ref{eq A_k-1}), we are done. 
\end{proof}

Now, we give a similar formula for the $(k-3)$th coefficient of $P_R^W$, omitting the proof for lack of space, and we refer to \cite{tesiMascia} for all the details.

\begin{proposition} \label{k-3 coeff}
Let $(R,W)$ and $P_R^W$ be as above. If $\delta \leq k-3$, we denote  by $c_{k-3}$ the $(k-3)$th coefficient of $P_i$, for all $i=0, \dots,d-1$. Then, it holds
\[
c_{k-3} = \frac{3 \left( \sum_{i=1}^k d_i \right)^2 - \sum_{i=1}^k d_i^2}{24 (k-3)! \prod_{i=1}^k d_i}
\]
\end{proposition}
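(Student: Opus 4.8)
The plan is to follow the proof of Proposition~\ref{k-2 coeff} almost verbatim, but to extract the coefficient of $n^{k-3}$ instead of $n^{k-2}$. Because $\delta \le k-3$, Remark~\ref{remark1} tells us that each summand $Q_j^{(r)}(t)/(1-\zeta^j t)^r$ of $\HP_R(t)$ with $j\neq 0$ produces a polynomial in $n$ of degree $r-1 \le \delta-1 \le k-4$, so none of them reaches degree $k-3$; hence $c_{k-3}$ is exactly the coefficient of $n^{k-3}$ in the same $S_0(n)=\sum_{h} a_h\binom{n+k-1-h}{k-1}$ appearing in Equation~(\ref{eq S_0}), where $a_h:=a_{0h}$ are the coefficients of $Q_0(t)$. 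Writing $\binom{n+k-1-h}{k-1}=\tfrac{1}{(k-1)!}\prod_{\ell=1}^{k-1}(n+\ell-h)$ and reading off the degree-two elementary symmetric function of the shifts $\ell-h$, I obtain
\[
c_{k-3}=\frac{1}{(k-1)!}\sum_{h} a_h\, e_2(h), \qquad e_2(h):=\sum_{1\le \ell<\ell'\le k-1}(\ell-h)(\ell'-h),
\]
where $e_2(h)$ is a quadratic polynomial in $h$.

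First I would expand $e_2(h)=\tfrac12\bigl[(\sum_{\ell}(\ell-h))^2-\sum_{\ell}(\ell-h)^2\bigr]$, using $\sum_{\ell=1}^{k-1}\ell=\tfrac{k(k-1)}{2}$ and $\sum_{\ell=1}^{k-1}\ell^2=\tfrac{(k-1)k(2k-1)}{6}$, to get explicit coefficients $\gamma,\beta,\alpha$ (polynomials in $k$) with $e_2(h)=\gamma h^2+\beta h+\alpha$. Next, exactly as in Equation~(\ref{Q_0 e Q'_0}), the moment identities $\sum_h a_h=Q_0(1)$, $\sum_h h\,a_h=Q_0'(1)$ and $\sum_h h^2 a_h=Q_0''(1)+Q_0'(1)$ turn $c_{k-3}$ into a linear combination of $Q_0(1),Q_0'(1),Q_0''(1)$. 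Since $Q_0(t)=\sum_{j=1}^{k}A_j(1-t)^{k-j}$ by Equation~(\ref{Q_0(t)}), differentiating and evaluating at $t=1$ annihilates every term but one, giving $Q_0(1)=A_k$, $Q_0'(1)=-A_{k-1}$ and $Q_0''(1)=2A_{k-2}$. Thus $c_{k-3}$ reduces to a $k$-dependent linear combination of $A_k,A_{k-1},A_{k-2}$, and the first two are already known from Lemma~\ref{formula_Ak} and the lemma computing $A_{k-1}$.

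The one genuinely new ingredient, which I would isolate as a lemma, is a closed form for $A_{k-2}$. Imitating the derivation of $A_{k-1}$, I would differentiate the identity~(\ref{1 = ...}) twice and evaluate at $t=1$. Every block attached to a root $\zeta^j\neq 1$ still carries the factor $(1-t)^k$, which survives two differentiations as a positive power of $(1-t)$ (here $k>2$) and hence vanishes at $t=1$; only the three terms $A_{k-2}(1-t)^2 G(t)$, $A_{k-1}(1-t)G(t)$ and $A_k G(t)$ remain, where $G(t):=g(t)/(1-t)^k=\prod_{j=1}^{k}(1+t+\cdots+t^{d_j-1})$. This yields $0=2A_{k-2}G(1)-2A_{k-1}G'(1)+A_k G''(1)$, i.e.\ $A_{k-2}$ is the coefficient of $(1-t)^2$ in the Taylor expansion of $1/G(t)$ at $t=1$. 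I would then reuse $G(1)=\prod_{i=1}^k d_i$ and $G'(1)=\tfrac12\prod_{i=1}^k d_i\,(\sum_{i=1}^k d_i-k)$ (both already computed), and evaluate the new quantity $G''(1)$ through the logarithmic derivative $G''/G=(\sum_j f_j'/f_j)^2+\sum_j(f_j''/f_j-(f_j'/f_j)^2)$ for $f_j(t)=1+\cdots+t^{d_j-1}$, with $f_j(1)=d_j$, $f_j'(1)=\tfrac{d_j(d_j-1)}{2}$ and $f_j''(1)=\tfrac{d_j(d_j-1)(d_j-2)}{3}$.

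The main obstacle is the final assembly rather than any single step. After substituting $\gamma,\beta,\alpha$ and the values of $A_k,A_{k-1},A_{k-2}$ (hence of $\sum_{i=1}^k d_i$, $\sum_{i=1}^k d_i^2$, $(\sum_{i=1}^k d_i)^2$ and $k$), one must check that all purely $k$-dependent contributions and all terms linear in $\sum_{i=1}^k d_i$ cancel, leaving precisely $\tfrac{3(\sum_{i=1}^k d_i)^2-\sum_{i=1}^k d_i^2}{24(k-3)!\prod_{i=1}^k d_i}$. In particular the prefactor must collapse from $1/(k-1)!$ to $1/(k-3)!$, so a factor $(k-1)(k-2)$ has to materialise out of $\gamma,\beta,\alpha$; I expect this to be the most error-prone bookkeeping. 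To control it I would keep the three power sums $\sum_h a_h$, $\sum_h h\,a_h$, $\sum_h h^2 a_h$ separate until the very end and verify the coefficients of $(\sum_i d_i)^2$, of $\sum_i d_i^2$, and of the remaining $k$-only terms one at a time, the last group being forced to vanish identically.
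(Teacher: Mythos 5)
Your proposal is correct and follows exactly the strategy the paper itself outlines for the higher coefficients (the detailed proof of this proposition is deferred to the cited thesis): extract the degree-$(k-3)$ part of $S_0(n)$, convert the moments $\sum_h a_h$, $\sum_h h\,a_h$, $\sum_h h^2 a_h$ into $Q_0(1)=A_k$, $Q_0'(1)=-A_{k-1}$, $Q_0''(1)=2A_{k-2}$, and obtain $A_{k-2}$ by differentiating the partial-fraction identity twice and evaluating at $t=1$. I checked the final assembly: one finds $e_2(h)=\frac{(k-1)(k-2)}{2}\bigl[\frac{k(3k-1)}{12}-kh+h^2\bigr]$ (which supplies the factor $(k-1)(k-2)$ turning $1/(k-1)!$ into $1/(k-3)!$) and $A_{k-2}=\frac{3(\sum_i d_i-k)^2-\sum_i d_i^2+6\sum_i d_i-5k}{24\prod_i d_i}$, after which every term linear in $\sum_i d_i$ and every $k$-only term cancels, leaving precisely $\frac{3(\sum_i d_i)^2-\sum_i d_i^2}{24(k-3)!\prod_i d_i}$.
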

\

So far we have given formulas for the leading coefficient, $(k-2)$th and $(k-3)$th coefficient of Hilbert quasi-polynomials. Unfortunately, we haven't found an explicit correlation between the formulas for these coefficients, meaning that we are not able to define a recurrence relation to obtain all coefficients of $P_R^W$. On the other hand, we can sketch a strategy to calculated formulas for any coefficient of Hilbert quasi-polynomials.

\smallskip

Let us see in detail the necessary steps to compute the $r$th coefficient of $P_R^W$, for $1 < r \leq k-1$, with the assumption that $\delta \leq r$.

First of all, let $P_i (x) = S_0(x) + S_1(x)\zeta^i + \dots + S_{d-1}(x)\zeta^{i \cdot (d-1)}$ as usual, where
\[
S_j(n) = \sum_{h=0}^{n_j} a_{jh} \binom{n+\alpha_j -h-1}{\alpha_j -1} \zeta^{-jh}.
\]
Since $\delta \leq r$ and by remark \ref{remark1}, the $r$th coefficient of $P_i$ is given exactly by the $r$th coefficient of $S_0(n)$, which we denote by $c_r$.
Let us recall the following useful expression for the Hilbert-Poincar\'e series
\begin{equation*}
\begin{split}
\HP_{R} (t) = \frac{1}{g(t)} = \frac{Q_0(t)}{(1-t)^k} + \sum_{j \in T_{\delta}} \frac{Q_j^{(\delta)}(t)}{(1-\zeta^j t)^{\delta}} + \dots + \sum_{j \in T_1} \frac{Q_j^{(1)}(t)}{(1-\zeta^j t)}\\
\end{split} 
\end{equation*}
where $Q_0(t) = A_1 (1-t)^{k-1} + A_2 (1-t)^{k-2} + \dots + A_k \in \Q[t]$.

The coefficient $c_r$ depends only on $k$ and $A_{r+1}, \dots, A_{k}$. So, we need to compute the constants $A_i$, for $i=r+1, \dots, k$, in order to get a suitable expression for $c_r$. To compute $A_i$, we evaluate at $t=1$ the ith derivative of the equation shown in (\ref{1 = ...}).
We observe that this evaluation gives 0 except for the terms in the first summation for $i \leq t \leq k$. For this reason, it needs to calculate only the derivatives of these addends.

\section{How to compute Hilbert quasi-polynomials}
In this section we present an algorithm for an efficient calculation of Hilbert quasi-polynomials. We have written Singular procedures to compute the Hilbert quasi-polynomial for rings $\K[x_1,\ldots,x_k]/I$.  These procedures can be downloaded from the website \url{www.dm.unipi.it/~caboara/Research/HilbertQP}

\subsection{The algorithm}
Let $(R/I, W)$ be as usual, we wish to compute its Hilbert quasi-polynomial $P_{R/I}^W :=\{ P_0, \dots, P_{d-1}\}$.
Since we know degree bounds for Hilbert quasi-polynomials, we can compute them by means of interpolation.

\smallskip
 
First off, let us consider $I = (0)$. Each $P_j$ has degree equal to $k-1$, so, given $j=0, \dots, d-1$, we want to calculate $P_j (x) := a_0 + a_1 x + \dots + a_{k-1} x^{k-1}$
such that
\[
P_j (n) = H_R^W(n) \quad \text{for all } n \geq k-1 \text { and such that } n \equiv j  \mod d
\]

Therefore, let $\beta := \min \{m \in \N \mid j +md \geq k-1\}$ and we consider the following $k$ positive integers congruent to $j + \beta d$ modulo $d$
\[
x_r := j + \beta d + rd, \quad \text{for } r = 0, \dots, k-1
\]
By construction, we have $P_j (x_r) = H_R^W (x_r)$, which means that the polynomial $P_j(x)$ interpolates the points $(x_r, H_R^W(x_r))$. 

Since we know the leading coefficient $c_{k-1}$, we can reduce the number of data points $x_r$. Actually, if $\delta \leq k-3$, we can also exploit the formulas for $c_{k-2}$ and $c_{k-3}$. In the latter case, we get a system of linear equations in the coefficients $a_i$, with $i=0, \dots, k-4$.  The system in matrix-vector form reads
\
\begin{equation}\label{system HP}
\begin{bmatrix}
1 & x_0 & \ldots  & x_0^{k-4} \\
1 & x_1 & \ldots   & x_1^{k-4} \\
\vdots & \vdots    & \vdots    & \vdots \\
1 & x_{k-4} & \ldots  & x_{k-4}^{k-4} 
\end{bmatrix}
\begin{bmatrix} a_0 \\ a_1 \\ \vdots \\ a_{k-4} \end{bmatrix}  =
\begin{bmatrix} H_R^W(x_0) - \sum_{i=k-3}^{k-1} c_{i} x_0^{i} \\ H_R^W(x_1) - \sum_{i=k-3}^{k-1} c_{i} x_1^{i} \\ \vdots \\ H_R^W(x_{k-4}) - \sum_{i=k-3}^{k-1} c_{i} x_{k-4}^{i}\end{bmatrix}
\end{equation}

We observe that this algorithm requires the computation of $k-3$ values of the Hilbert function, the construction of a Vandermonde matrix of dimension $k-3$ and its inversion. We have not yet shown how to calculate $H_R^W(x_r)$, for $r=0, \dots, k-3$.
 
\smallskip
 
Let $n \in \N$. The problem of calculating $H_R^W(n)$ is equivalent to the problem of determining the number of partitions of an integer into elements of a finite set $S := \{ d_1, \dots, d_k \}$, that is, the number of solutions in non-negative integers, $\alpha_1, \dots, \alpha_k$, of the equation
\[
\alpha_1 d_1 + \cdots + \alpha_k d_k =n
\]
This problem was solved in the nineteenth century (\cite{sylvester1882subvariants}, \cite{glaisher1909formulae}) and the solution is the coefficient of $x^n$ in
\begin{equation} \label{power series}
\left[ (1- x^{d_1}) \cdots (1-x^{d_k}) \right]^{-1}
\end{equation}

We are going to give an efficent method for getting the coefficient of $x^n$ in the power series expansion of Equation (\ref{power series}). We refer to \cite{lee1992power} for a in-depth analysis on the power series expansion of a rational function. Let 

\[
g(x) = \prod_{i=1}^k (1-\lambda_i x)^{\alpha_i} \qquad \text{and} \qquad f(x) = \prod_{i = k+1}^l (1- \lambda_i x)^{\alpha_i}
\]
be any polynomials with constant coefficient 1, where $\lambda_i$ are distinct and non-zero and the degree of $f(x)$ is less than that of $g(x)$.

\begin{lemma} \label{n b(n)}
Let 
\[
\frac{f(x)}{g(x)} = \sum_{n \geq 0} b(n) x^n
\]
the power series expansion of $f(x)/g(x)$. Then,
\[
nb(n) = \sum_{r=1}^n \left(\sum_{i=1} ^k \alpha_i \lambda_i^r - \sum_{i=k+1}^l \alpha_i \lambda_i^r \right) b(n-r)
\]
\end{lemma}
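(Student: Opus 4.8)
The plan is to compute the logarithmic derivative of $f(x)/g(x)$ as a formal power series and then match coefficients. Since $f$ and $g$ both have constant term $1$, they are units in the power series ring, so the operator $L(h):=h'/h$ is well defined on each of them. It is additive over products (by the product rule), satisfies $L(h^{\alpha})=\alpha\,L(h)$ for $\alpha\in\N$, and obeys $L(f/g)=L(f)-L(g)$; all of this holds purely formally, with no assumption on the size of the $\lambda_i$.

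First I would apply $L$ factor by factor. Since $L(1-\lambda_i x)=\dfrac{-\lambda_i}{1-\lambda_i x}$, I get
\[
L(g)=\sum_{i=1}^{k}\frac{-\alpha_i\lambda_i}{1-\lambda_i x},\qquad L(f)=\sum_{i=k+1}^{l}\frac{-\alpha_i\lambda_i}{1-\lambda_i x},
\]
so that, writing $F:=f/g=\sum_{n\ge 0}b(n)x^n$,
\[
\frac{F'}{F}=L(f)-L(g)=\sum_{i=1}^{k}\frac{\alpha_i\lambda_i}{1-\lambda_i x}-\sum_{i=k+1}^{l}\frac{\alpha_i\lambda_i}{1-\lambda_i x}.
\]
Next I would expand each geometric series via $\dfrac{\lambda_i}{1-\lambda_i x}=\sum_{r\ge 1}\lambda_i^{r}x^{r-1}$, which collects everything into
\[
\frac{F'}{F}=\sum_{r\ge 1}c_r\,x^{r-1},\qquad c_r:=\sum_{i=1}^{k}\alpha_i\lambda_i^{r}-\sum_{i=k+1}^{l}\alpha_i\lambda_i^{r}.
\]

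Finally I would clear the denominator, rewriting the identity as $F'=F\cdot\sum_{r\ge 1}c_r x^{r-1}$, and read off the coefficient of $x^{n-1}$ on both sides. On the left, $F'=\sum_{n\ge 1}n\,b(n)x^{n-1}$ contributes $n\,b(n)$; on the right, the Cauchy product of $\sum_{m\ge 0}b(m)x^m$ with $\sum_{r\ge 1}c_r x^{r-1}$ contributes $\sum_{r=1}^{n}c_r\,b(n-r)$. Equating the two yields precisely
\[
n\,b(n)=\sum_{r=1}^{n}\Bigl(\sum_{i=1}^{k}\alpha_i\lambda_i^{r}-\sum_{i=k+1}^{l}\alpha_i\lambda_i^{r}\Bigr)b(n-r),
\]
as claimed. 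There is no deep obstacle here; the only point demanding care is to treat $L$ entirely formally, so that the additivity and the relation $L(f/g)=L(f)-L(g)$ are justified by the nonvanishing of the constant terms rather than by any analytic convergence, and to keep the index bookkeeping straight when passing from the partial-fraction expansion to the Cauchy product.
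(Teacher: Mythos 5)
Your proof is correct and follows essentially the same route as the paper: both take the logarithmic derivative of $f(x)/g(x)$, expand each $\frac{\alpha_i\lambda_i}{1-\lambda_i x}$ as a geometric series, clear the denominator, and compare coefficients of $x^{n-1}$ in the resulting Cauchy product. Your version merely makes explicit the formal justification of the logarithmic-derivative identities, which the paper leaves implicit.
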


\begin{proof}
Since 
\[
\frac{\partial}{\partial x} \left[ log \left( \frac{f(x)}{g(x)} \right) \right] = \sum_{i=1}^k \frac{\alpha_i \lambda_i}{ 1 - \lambda_i x} - \sum_{i=k+1}^l \frac{\alpha_i \lambda_i}{ 1 - \lambda_i x}
\]

we have 

\[
\sum_{n\geq 0} nb(n)x^{n-1} = \left[ \sum_{ r \geq 1} \left (\sum_{i=1} ^k \alpha_i \lambda_i^r - \sum_{i=k+1}^l \alpha_i \lambda_i^r \right) x^{r-1} \right] \left[ \sum_{s \geq 0} b(s) x^s \right]
\]
\

and the result follows by picking out the coefficient of $x^{n-1}$ on the right.
\end{proof}

Let $\zeta := \zeta_d$ be a primitive $d$th root of unity. Since
\begin{equation*}
\prod_{i=1}^k (1-x^{d_i}) = (1-x)^k  \prod_{j \in T_k} (1-\zeta^j x)^k \cdots \prod_{j \in T_1} (1-\zeta^j x) 
\end{equation*}

we can apply Lemma (\ref{n b(n)}) with $g(x) :=\prod_{i=1}^k (1-x^{d_i}) $ and $f(x) = 1$. Since by our assumptions, the $d_i$ are coprime, then surely $T_k = \emptyset$. We define $\tilde{T_i} := T_i$ for all $i=1, \dots, k-1$ and $\tilde{T_k} := \{0\}$ and thus we obtain the following recursive formula for computing $H_R^W(n)$

\begin{equation}\label{H_R^W recursive}
H_{R}^W (n) = \frac{1}{n} \sum_{r=1}^n \left[ \sum_{i=1}^k i \left( \sum_{j \in \tilde{T_i}} \zeta^{jr} \right) \right] H_{R}^W(n-r)
\end{equation}

It follows that if we know $H_R^W(i)$ for all $i=1, \dots, n-1$, we can easily compute $H_R^W(n)$ by means of equation (\ref{H_R^W recursive}). 

\medskip
 
Given an equation $\alpha_1 d_1 + \dots + \alpha_k d_k = n$ the problem of counting the number of non-negative integer solutions $\alpha_1, \dots, \alpha_k$  could be solved also using brute force. But, given $n \in \N$, to compute $H_R^W(n)$ with brute force needs $O(n^k)$ operations, whereas the procedure which we have implemented has a quadratic cost in $n$, in fact it needs $O(k n^2)$ operations.

\medskip

Up to now we have shown how to calculate $P_R^W$. For computing a quasi-polynomial $P_{R/I}^W = \{P_0, \dots, P_{d-1}\}$, for any vector $W$ and any homogeneous ideal $I$ of $R$, the procedure computes first $P_R^{W'}$, where $W'$ is obtained by dividing $W$ by $gcd(d_1, \dots, d_k)$, and then it produces $P_{R/I}^W$ starting from $P_R^{W}$, using the relation between $P_R^{W'}$ and $P_{R/I}^W$ showed in propositions (\ref{prop6}).

\subsection{Some example}

\noindent
\begin{example}\label{ex2}

\noindent
Let $R= \Q[x_1, \dots, x_5]$ be the polynomial ring graded by the integer vector $W = [1,2,3,4,6]$ as in Example \ref{ex1}. We have already described $P_{R}^W$. Let us consider the ring $R/I$ with $I = (x_1^3, x_2 x_3)$. The Hilbert quasi-polynomial $P_{R/I}^W$ is given by
{
\begin{tabular}{l l l l l l l}
$P_0(x)$&$=$&$\textcolor{blue}{1/16}x^2$&$+$&$\textcolor{blue}{1/2}x$&$+$&$\textcolor{grad0}{1}$ \\
$P_1(x)$&$=$&$\textcolor{olive}{1/24}x^2$&$+$&$\textcolor{olive}{1/3}x$&$+$&$\textcolor{grad1}{5/8}$ \\
$P_2(x)$&$=$&$\textcolor{blue}{1/16}x^2$&$+$&$\textcolor{blue}{1/2}x$&$+$&$\textcolor{grad2}{3/4}$ \\
$P_3(x)$&$=$&$\textcolor{olive}{1/24}x^2$&$+$&$\textcolor{olive}{1/3}x$&$+$&$\textcolor{grad3}{5/8}$ \\
$P_4(x)$&$=$&$\textcolor{blue}{1/16}x^2$&$+$&$\textcolor{blue}{1/2}x$&$+$&$\textcolor{grad4}{1}$ \\
$P_5(x)$&$=$&$\textcolor{olive}{1/24}x^2$&$+$&$\textcolor{olive}{1/3}x$&$+$&$\textcolor{grad5}{7/24}$ \\
$P_6(x)$&$=$&$\textcolor{blue} {1/16}x^2$&$+$&$\textcolor{blue}{1/2}x$&$+$&$\textcolor{grad6}{3/4}$ \\
$P_7(x)$&$=$&$\textcolor{olive} {1/24}x^2$&$+$&$\textcolor{olive}{1/3}x$&$+$&$\textcolor{grad7}{5/8}$ \\
$P_8(x)$&$=$&$\textcolor{blue}{1/16}x^2$&$+$&$\textcolor{blue}{1/2}x$&$+$&$\textcolor{grad8}{1}$ \\
$P_9(x)$&$=$&$\textcolor{olive}{1/24}x^2$&$+$&$\textcolor{olive}{1/3}x$&$+$&$\textcolor{grad9}{5/8}$ \\
$P_{10}(x)$&$=$&$\textcolor{blue}{1/16}x^2$&$+$&$\textcolor{blue}{1/2}x$&$+$&$\textcolor{grad10}{3/4}$\\
$P_{11}(x)$&$=$&$\textcolor{olive}{1/24}x^2$&$+$&$\textcolor{olive}{1/3}x$&$+$&$\textcolor{grad11}{7/24}$ \\
\end{tabular}
}
\end{example}

The time of computation for all $P_R^W$ and $P_{R/I_i}^W$ is less than 1 second. 

\noindent
\begin{example}\label{ex3}
\noindent
Let $R := \Q[x,y,z,t,u]$ be graded by $W := [2,4,8,16,32]$. We have computed $P_R^W = \{P_0, \dots, P_{31}\}$ and also in this case the time of computation is less than 1 second. The odd indexed polynomials are equal to $0$, while the even indexed are
\begin{flushleft}
\begin{small}
\begin{tabular}{l l l}
$P_0(x)$&$=$&$\textcolor{BlueCarisma}{1/393216}x^4+\textcolor{teal}{1/3072}x^3+\textcolor{grad_0}{43/3072}x^2+\textcolor{grad_0}{11/48}x+\textcolor{grad0}1$\\[1pt]
$P_2(x)$&$=$&$\textcolor{BlueCarisma}{1/393216}x^4+\textcolor{olive}{5/16384}x^3+\textcolor{grad_1}{595/49152}x^2+\textcolor{grad_1}{725/4096}x+\textcolor{grad1}{4875/8192}$\\[1pt]
$P_4(x)$&$=$&$\textcolor{BlueCarisma}{1/393216}x^4+\textcolor{teal}{1/3072}x^3+\textcolor{grad_2}{169/12288}x^2+\textcolor{grad_2}{167/768}x+\textcolor{grad2}{455/512}$\\[1pt]
$P_6(x)$&$=$&$\textcolor{BlueCarisma}{1/393216}x^4+\textcolor{olive}{5/16384}x^3+\textcolor{grad_3}{583/49152}x^2+\textcolor{grad_3}{681/4096}x+\textcolor{grad3}{4147/8192}$\\[1pt]
$P_8(x)$&$=$&$\textcolor{BlueCarisma}{1/393216}x^4+\textcolor{teal}{1/3072}x^3+\textcolor{grad_0}{43/3072}x^2+\textcolor{grad_0}{11/48}x+\textcolor{grad4}{35/32}$\\[1pt]
$P_{10}(x)$&$=$&$\textcolor{BlueCarisma}{1/393216}x^4+\textcolor{olive}{5/16384}x^3+\textcolor{grad_1}{595/49152}x^2+\textcolor{grad_1}{725/4096}x+\textcolor{grad5}{5643/8192}$\\[1pt]
$P_{12}(x)$&$=$&$\textcolor{BlueCarisma}{1/393216}x^4+\textcolor{teal}{1/3072}x^3+\textcolor{grad_2}{169/12288}x^2+\textcolor{grad_2}{161/768}x+\textcolor{grad6}{455/512}$\\[1pt]
$P_{14}(x)$&$=$&$\textcolor{BlueCarisma}{1/393216}x^4+\textcolor{olive}{5/16384}x^3+\textcolor{grad_3}{583/49152}x^2+\textcolor{grad_3}{649/4096}x+\textcolor{grad7}{4275/8192}$\\[1pt]
$P_{16}(x)$&$=$&$\textcolor{BlueCarisma}{1/393216}x^4+\textcolor{teal}{1/3072}x^3+\textcolor{grad_0}{43/3072}x^2+\textcolor{grad_0}{11/48}x+\textcolor{grad8}{5/4}$\\[1pt]
$P_{18}(x)$&$=$&$\textcolor{BlueCarisma}{1/393216}x^4+\textcolor{olive}{5/16384}x^3+\textcolor{grad_1}{595/49152}x^2+\textcolor{grad_1}{725/4096}x+\textcolor{grad9}{6923/8192}$\\[1pt]
$P_{20}(x)$&$=$&$\textcolor{BlueCarisma}{1/393216}x^4+\textcolor{teal}{1/3072}x^3+\textcolor{grad_2}{169/12288}x^2+\textcolor{grad_2}{167/768}x+\textcolor{grad10}{583/512}$\\[1pt]
$P_{22}(x)$&$=$&$\textcolor{BlueCarisma}{1/393216}x^4+\textcolor{olive}{5/16384}x^3+\textcolor{grad_3}{583/49152}x^2+\textcolor{grad_3}{681/4096}x+\textcolor{grad11}{6195/8192}$\\[1pt]
$P_{24}(x)$&$=$&$\textcolor{BlueCarisma}{1/393216}x^4+\textcolor{teal}{1/3072}x^3+\textcolor{grad_0}{43/3072}x^2+\textcolor{grad_0}{11/48}x+\textcolor{grad12}{35/32}$\\[1pt]
$P_{26}(x)$&$=$&$\textcolor{BlueCarisma}{1/393216}x^4+\textcolor{olive}{5/16384}x^3+\textcolor{grad_1}{595/49152}x^2+\textcolor{grad_1}{725/4096}x+\textcolor{grad13}{5643/8192}$\\[1pt]
$P_{28}(x)$&$=$&$\textcolor{BlueCarisma}{1/393216}x^4+\textcolor{teal}{1/3072}x^3+\textcolor{grad_2}{169/12288}x^2+\textcolor{grad_2}{161/768}x+\textcolor{grad14}{327/512}$\\[1pt]
$P_{30}(x)$&$=$&$\textcolor{BlueCarisma}{1/393216}x^4+\textcolor{olive}{5/16384}x^3+\textcolor{grad_3}{583/49152}x^2+\textcolor{grad_3}{649/4096}x+\textcolor{grad15}{2227/8192}$\\[1pt]
\end{tabular}
\end{small}
\end{flushleft}
In this example, we have $k=5$, $\delta = 2$, $\delta_4 = 2$, $\delta_3 = 4$, $\delta_2= 8$, $\delta_1 = 16$ and $\delta_0 = 32$. We can easily check that $P_R^W$ satisfies all proprieties that we have illustrated in the previous sections.
\end{example}

\section{Conclusions and further work}

In this work, we have produced a partial characterization of Hilbert quasi-polynomials for  the
$\N_{+}^{k}$-graded polynomial rings. It should be of interest to complete this characterization, finding closed formulas for as many as possible coefficients of the Hilbert quasi-polynomial, periodic part included. As a side effect, this will allow us to write more efficient procedures for the computation of Hilbert quasi-polynomials. 

\smallskip

Actually, in section 2 we have sketched a strategy to recover coefficients for the fixed part, but a limit of this method is the difficulty of simplifying computations. Familiarity with combinatoric tools could be of great help to obtain formulas easy to read and to use, like those we have already found out.

\smallskip

To compute the $r$th coefficient of $P_R^W$, we have supposed $\delta \leq r$. We can ask what is the behaviour of the coefficients without any assumptions about $\delta$. In section 2, we have proved that they keep repeating with a certain period $\delta_r$. We are interesting in understanding if it is possible to find formulas also in the general case. We have analysed some numerical experiments, and we have observed that there is a sort of regularity in the expression of the coefficients. 
Describing completely the coefficients of $P_R^W$ requires considerable effort, but it is an interesting topic for future work.
Moreover, we want to extend our work to $\N_{+}^{k}$-graded quotient rings $\K[x_1,\ldots,x_k]/I$, starting with simple cases (e.g. when the ideal $I$ is generated by a regular sequence).



\nocite{*}
\bibliographystyle{alpha}
\bibliography{biblio}

\end{document}